\theoremstyle{plain}
\newtheorem{theorem}{Theorem}
\newtheorem{lemma}[theorem]{Lemma}
\newtheorem*{lemma*}{Lemma}
\newtheorem{corollary}[theorem]{Corollary}
\newtheorem*{theorem*}{Theorem}
\newtheorem{proposition}[theorem]{Proposition}
\theoremstyle{definition}
\newtheorem{definition}[theorem]{Definition}
\newtheorem{example}[theorem]{Example}
\newtheorem{remark}[theorem]{Remark}
\newtheorem*{remark*}{Remark}
\newtheorem*{notation*}{Notation}
\newcommand{\cF}{\mathscr F}
\newcommand{\N}{{\mathbb N}}
\newcommand{\R}{{\mathbb R}}
\newcommand{\Q}{{\mathbb Q}}
\newcommand{\msr}{Y}
\newcommand{\exclude}[1]{}
\definecolor{grey}{rgb}{0.7, 0.7, 0.7}
\definecolor{darkgreen}{rgb}{0., 0.6, 0.}
\definecolor{bRed}{rgb}{1,0.7,0.7}
\def\blfootnote{\xdef\@thefnmark{}\@footnotetext}
\title{Exception sets of intrinsic and piecewise Lipschitz functions}
\author{Gunther Leobacher\thanks{G.~Leobacher is supported by the Austrian
Science Fund (FWF): Project F5508-N26, which is part of the Special Research
Program `Quasi-Monte Carlo Methods: Theory and Applications'.}\; and Alexander Steinicke}
\begin{document}

\maketitle

\begin{abstract}
We consider a class of functions defined on metric spaces which generalizes the concept of piecewise Lipschitz continuous functions on an interval or on polyhedral structures. The study of such functions requires the investigation of their exception sets where the Lipschitz property fails. The newly introduced notion of permeability describes sets which are natural
exceptions for Lipschitz continuity in a well-defined sense. One of the main results states that continuous functions which 
are intrinsically Lipschitz continuous outside a permeable set are Lipschitz continuous on the whole domain with respect to the intrinsic metric.   
We provide examples of permeable sets in $\R^d$, which include Lipschitz  submanifolds.     
\end{abstract}

\bigskip
\noindent Keywords: intrinsic metric, permeable sets, piecewise Lipschitz continuity

\noindent MSC 2020: 54C05, 54E40

\section{Introduction}

Exception sets for the regularity of a function are encountered when considering functions $f\colon I\to\R$ defined on an interval $I$ that have a certain property (e.g. continuity, differentiability) when restricted to a subset $E\subseteq I$ but not on the whole of $I$. The complement $\Theta=I\setminus E$ is then an {\em exception set} for the function's property. A particularly common example here is the case of a finite exception set $\Theta$ for a function defined on a real interval. Such a finite set partitions the interval $I$ into several parts or {\em pieces}, hence leading to the notions of {\em piecewise} continuous, differentiable, etc. functions. The property which will be of our interest is the one of Lipschitz continuity and our work is motivated by the desire to generalize the notion of `piecewise Lipschitz continuity' to the multidimensional case. Consider the following definition:

\begin{definition}\label{def:pwlip-1d}
A function $f\colon I\to \R$ is {\em piecewise Lipschitz continuous} if there exist finitely many 
points $x_1,\ldots,x_n\in I$ with  $x_0:=\inf I<x_1<\ldots <x_{n+1}:=\sup I$,  
such that $f|_{(x_{j-1},x_j)}$ is Lipschitz continuous for every $j=1,\dots,n+1$.
\end{definition}

With this definition, the following result is easily proven (see, for example, Lemma 2.4 in \cite{sz2016b}):

\begin{lemma}\label{lem:pwlip+cont=lip}
Let $I\subseteq \R$ be an interval and  let $f\colon I\to \R$ be continuous and piecewise Lipschitz continuous. 
Then $f$ is Lipschitz continuous.
\end{lemma}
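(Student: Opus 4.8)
The plan is as follows. Fix the breakpoints $x_0=\inf I<x_1<\dots<x_{n+1}=\sup I$ and the Lipschitz constants $L_j\ge 0$ of $f|_{(x_{j-1},x_j)}$ provided by \autoref{def:pwlip-1d}, and set $L:=\max_{1\le j\le n+1}L_j$. I will show that $f$ is $L$-Lipschitz on the whole of $I$, which is the assertion.

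The first step is to upgrade each per-piece estimate to the closure of the piece: I claim that $|f(a)-f(b)|\le L\,|a-b|$ whenever $a,b\in\overline{(x_{j-1},x_j)}\cap I$ for one and the same $j$. Indeed, such $a,b$ can be approximated by sequences $a_k\to a$, $b_k\to b$ with $a_k,b_k\in(x_{j-1},x_j)\subseteq I$, so that $|f(a_k)-f(b_k)|\le L_j|a_k-b_k|\le L|a_k-b_k|$, and letting $k\to\infty$ and using the continuity of $f$ gives the claim. In particular this applies to the interior breakpoints $x_1,\dots,x_n\in I$ and to whichever of $x_0,x_{n+1}$ happen to lie in $I$.

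The second step is a chaining argument. Let $x,y\in I$ with $x<y$. If no breakpoint $x_\ell$ satisfies $x<x_\ell<y$, then $x$ and $y$ lie in the closure of a common piece and $|f(x)-f(y)|\le L|x-y|$ follows from the first step. Otherwise, let $x_i<x_{i+1}<\dots<x_m$ be exactly the breakpoints strictly between $x$ and $y$; then in the increasing list $x<x_i<x_{i+1}<\dots<x_m<y$ any two consecutive entries lie in the closure of one and the same piece, so the triangle inequality together with the first step yields
\begin{align*}
|f(x)-f(y)| &\le |f(x)-f(x_i)|+\sum_{\ell=i}^{m-1}|f(x_\ell)-f(x_{\ell+1})|+|f(x_m)-f(y)|\\
&\le L(x_i-x)+L\sum_{\ell=i}^{m-1}(x_{\ell+1}-x_\ell)+L(y-x_m)=L(y-x),
\end{align*}
where the last equality is a telescoping sum. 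Hence $f$ is $L$-Lipschitz on $I$.

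I expect the only slightly delicate point to be the first step, together with the bookkeeping around the endpoints of $I$ (which may be infinite or not attained); the transfer of the Lipschitz estimate to the closure of each piece rests merely on continuity and a limiting argument, and once a single constant $L$ works on all closed pieces the chaining in the second step is automatic, since the intermediate breakpoints are traversed in monotone order and the successive distances sum to exactly $|x-y|$.
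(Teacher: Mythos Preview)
Your argument is correct. The paper does not actually supply a proof of this lemma; it merely states that the result ``is easily proven'' and refers to \cite[Lemma~2.4]{sz2016b}. Your two-step approach---first extending each per-piece Lipschitz bound to the closure of the piece via continuity, then chaining through the finitely many intermediate breakpoints with the triangle inequality and a telescoping sum---is precisely the standard elementary argument one would expect, and your bookkeeping around the (possibly infinite or unattained) endpoints $x_0,x_{n+1}$ is handled correctly by intersecting with $I$.

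It is worth noting that the same mechanism underlies the paper's main result, \autoref{thm:main-th1}: there one again extends the Lipschitz estimate from open pieces to their closures by continuity (equation~\eqref{eq:lip-0}) and then chains across finitely many exceptional points with the triangle inequality (equation~\eqref{eq:lip-metrik}); the additional work in that theorem is the transfinite induction needed to cope with a merely countable, rather than finite, exception set. So your proof is in the same spirit as the paper's general argument, specialized to the trivial case of finitely many breakpoints.
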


Definition \ref{def:pwlip-1d} is a reasonable implementation of the concept, although we do not claim that it is universally accepted across the
mathematical community. Generalizing this (or a similar) definition to the multidimensional case is far from being
unambiguous as many multidimensional concepts coincide in dimension 1: Intervals are precisely the convex 
subsets of $\R$, but also precisely the star-shaped, connected, path-connected, arc-connected sets and the polytopes.
Multidimensional generalizations of the exception set $\{x_1,\ldots,x_n\}$ in  Definition \ref{def:pwlip-1d}
are affine hyperplanes, (finite unions of) submanifolds, finite sets, among others. Classical generalizations of Definition \ref{def:pwlip-1d} to higher dimensions require the desired property on elements of a polytopal, polyhedral or simplicial subdivision of the domain. Variants of this procedure are well known for defining the class of {\em piecewise linear} (\textsc{pl}) or {\em piecewise differentiable} (\textsc{pdiff}) functions, see e.g. \cite[1.4, Ch.1]{Rourke1972}, \cite[Section 2.2]{scholtes2012introduction} or \cite[Section 3.9]{thurston97}. However, these classes comprise of  continuous functions. A definition of piecewise linear using a subdivison by hyperplanes not implying continuity can be found in the introduction of \cite{Conn1998}. 
A simple extension of the notion `piecewise Lipschitz continuous', loosely following the ideas in the references above,
could be given by the subsequent definition:
\begin{definition}\label{def:pwlip-md}
Let $M\subseteq \R^d$. 
A function $f\colon M\to \R$ is {\em piecewise Lipschitz continuous} on $M$ if there exist finitely many 
open polyhedra\footnote{By a polyhedron we mean the intersection of finitely many affine half spaces in $\R^d$.} $P_1,\ldots,P_m$ with $P_j\cap P_k=\emptyset$ for $j\ne k$ and $M\subseteq\bigcup_{j=1}^n \overline{P}_j$,
such that $f|_{P_j}$ is Lipschitz continuous for every $j=1,\dots,n$.
\end{definition}

The definition in \cite{mikkelsen18} (implicit in Assumption 2.2)
is similar to Definition \ref{def:pwlip-md}, but replaces `open polyhedra' by `open sets'. In \cite{balcan18}
general sets for the $P_i$ are allowed, and the resulting difficulties are overcome by considering
a `dispersed' family of functions which means that not too many of them have their Lipschitz-exceptions 
around the same spot. 
We will take a path  different to Definition \ref{def:pwlip-md} by concentrating on a notion 
which, instead of focussing on the {\em pieces} on which
the Lipschitz property holds, we emphasize the {\em exception set} where the Lipschitz property fails.
Related approaches can be found in 
 \cite{liwang2013}, where the authors consider functions which are 
Lipschitz on each of two parts of a domain which is split by a $C^{1,1}$ manifold. 
A more general exception set occurs
in  \cite[Section 14.2]{hlavacek2004uncertain}, where  a piecewise Lipschitz continuous function is one 
which is is defined on a union of domains  with Lipschitz boundaries, which is Lipschitz continuous on these subdomains. 
An even more complex exception set is allowed in \cite{masubuchi}, where a function is `piecewise $C^2$', if the $C^2$ property fails on a closed set of 
Lebesgue-measure 0. An interesting theorem outside the $\R^d$-setting can be found in \cite{cluckers12}, where it is shown that a specific notion 
of piecewise Lipschitz continuity follows from a local Lipschitz condition for semi-algebraic 
functions $\Q_p^d\to\Q_p$, where $\Q_p$ are the $p$-adic numbers.

\smallskip
A guiding principle for what we attempt here will be that a suitable generalization  of Lemma \ref{lem:pwlip+cont=lip} should hold.
We generalize and extend the approach of \cite{sz2016b}, who call a function $f\colon\R^d\to \R^m$  {\em piecewise Lipschitz} 
with exception set $\Theta\subseteq \R^d$, if $f|_{\R^d\setminus \Theta}$ is Lipschitz with respect to the 
intrinsic metric (see Definition \ref{def:intrinsic}) on  $\R^d\setminus \Theta$, and where $\Theta$ is a hypersurface, that is, a $(d-1)$-dimensional submanifold of $\R^d$. They prove a multidimensional version of Lemma \ref{lem:pwlip+cont=lip} under an additional condition on $\Theta$, namely, the condition that we will call
{\em finitely permeable} (see Definition \ref{def:permeable}). The task to determine suitable exception sets for Lipschitz functions with respect to the intrinsic metric should not be mixed with the -- of course related -- problem of finding sets $R$ such that functions defined on the complement $R^c$ and belonging to a certain regularity class there may be extended to the whole space. Such {\em removable} sets $R$ have been investigated in complex analysis and geometric function theory for a long time, see e.g. \cite{younsi2015}, \cite{rajala2019}, and, in connection with Lipschitz continuity, \cite{craig-et-al-2015}. In the early account \cite{Ahlfors1950}, removable sets are called {\em function theoretic null-sets} and are characterized by an extremal distance condition.

The generalization of  piecewise Lipschitz continuous functions to Lipschitz continuous functions with respect to the intrinsic metric up to an exception set includes far more functions than when the induced metric on the complement of the exception is used,
see  Example \ref{ex:intLip} in Section \ref{sec:pwlm} for instance.
Also, Lemma \ref{lem:pwlip+cont=lip} does not simply follow from  
well-known
 extension theorems, such as the classical ones of Kirszbraun (see \cite{Kirszbraun1934}, \cite[p.21]{Schwartz1969}), McShane-Whitney (see \cite{Whitney1934,McShane1934}) or more recent ones such as the one in \cite{mocanu2008}.

We expand the generalization from \cite{sz2016b} in many directions. In Section \ref{sec:pwlm} we first recall the 
notion of the intrinsic metric and introduce the concept of permeable and finitely permeable sets. This is done in the general 
framework of metric spaces. We define intrinsically Lipschitz continuous functions as functions which are Lipschitz 
continuous with respect to the intrinsic metric on their domain. Our first main result is then Theorem \ref{thm:main-th1}, 
which is a multidimensional version of Lemma \ref{lem:pwlip+cont=lip}, where the additional assumption is that the exception set is permeable. No further assumptions are required, in particular, the exception set need not be a manifold.
The proof uses transfinite induction and the Cantor-Bendixson theorem.

The notion of permeability is weaker than that of finite permeability,  and  much weaker than finiteness, and we show that 
in the 1-dimensional real case it not only is a sufficient condition on a set $\Theta$ so that every continuous function which is Lipschitz continuous with exception set  $\Theta$ is Lipschitz -- it is also necessary.

Section \ref{sec:smf} is then dedicated to finding large and practically relevant classes of subsets of $\R^d$ which 
are permeable.  We show in Theorem \ref{th:lip} that every Lipschitz submanifold which is a closed
subset of $\R^d$ is finitely permeable and thus permeable.  We discuss further generalizations, instructive examples and counterexamples.

Our research presents a new concept in analysis with already a number of non-trivial results
and generalizations. 
Moreover, it opens pathways to generalizing results in many applied fields, where concepts of piecewise Lipschitz 
continuous functions have already been used, such as image processing \cite{burger2019}, uncertain input data problems
\cite{hlavacek2004uncertain}, optimal control \cite{huo14}, stochastic differential equations \cite{sz2016b}, information processing \cite{Kuh1977,Rodriguez2019}, machine learning \cite{balcan18,sharma20a},  
dynamical systems \cite{storace2004},  
shape-from-shading problems \cite{tourin1992}.

\section{Intrinsic Lipschitz functions and permeable subsets of metric spaces}\label{sec:pwlm} 

Throughout this section, let $(M,d)$ be a metric space. To begin,  we recall some definitions for metric spaces. 
\begin{definition}[Path, arc, length]
\begin{enumerate}
\item A {\em path} in $M$ is a continuous mapping $\gamma\colon[a,b]\to M$.
We also say that $\gamma$ is a path in $M$ from $\gamma(a)$ to $\gamma(b)$. 
\item An injective path is called an {\em arc}.
\item If $\gamma\colon[a,b]\to M$ is a path in $M$, then its {\em length} $\ell(\gamma)$ is defined
as
\[
\ell(\gamma)
:=\sup\Big\{\sum_{k=1}^n d\big(\gamma(t_k),\gamma(t_{k-1})\big)\colon n\in\N,\,a=t_0<\ldots<t_n=b\Big\}\,.
\]
\end{enumerate}
\end{definition}

The following important lemma is  an immediate consequence of Proposition 3.4 and Proposition 3.5 in \cite{AlbOtt17}. It states that one can always replace a path in $M$  by an injective one with length
at most that of the original path and its image contained in that of the original path.

\begin{lemma}\label{lem:injective-curve} 
Let $x,y\in M$, $x\ne y$ and $\gamma\colon [0,1]\to M$ be a path from $x$ to $y$. 
Then there exists an arc $\eta\colon [0,1]\to M$ from $x$ to $y$ with $\eta([0,1])\subseteq \gamma([0,1])$
and $\ell(\eta)\le\ell(\gamma)$.
\end{lemma}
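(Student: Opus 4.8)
The plan is to construct the arc $\eta$ directly from $\gamma$ by the standard device of excising the loops of $\gamma$, and to organize this so that the two cited results of \cite{AlbOtt17} apply cleanly. First I would recall what Proposition 3.4 and Proposition 3.5 of \cite{AlbOtt17} actually provide: roughly, one of them says that a path of finite length can be reparametrized by arc length (or at least made to have a monotone, continuous, surjective ``length function''), and the other says that from a path between two distinct points one can extract an injective subpath whose image lies inside the original image and whose length does not exceed that of the original. So the proof is essentially a matter of assembling these, being careful about two edge cases: the case $\ell(\gamma)=\infty$, and the case that $\gamma$ is constant on some initial or terminal segment.

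The key steps, in order: (1) If $\ell(\gamma)=\infty$ there is nothing delicate — one still needs an arc inside $\gamma([0,1])$ from $x$ to $y$, and since $x\ne y$ the image $\gamma([0,1])$ is a nondegenerate Peano continuum (a connected, locally connected, compact metric space), which is arcwise connected by the Hahn–Mazurkiewicz/Moore theory; so an arc $\eta$ from $x$ to $y$ inside $\gamma([0,1])$ exists, and $\ell(\eta)\le\infty=\ell(\gamma)$ trivially. Alternatively, and more in the spirit of the paper, one invokes the injective-subpath proposition of \cite{AlbOtt17} directly, which is stated without a finite-length hypothesis. (2) If $\ell(\gamma)<\infty$, apply the reparametrization proposition to replace $\gamma$ by a path with the same image and length, parametrized so that its length function $s\colon[0,1]\to[0,\ell(\gamma)]$ is continuous and nondecreasing. (3) Apply the injective-selection proposition to obtain an injective path $\tilde\eta$ from $x$ to $y$ with $\tilde\eta([0,1])\subseteq\gamma([0,1])$ and $\ell(\tilde\eta)\le\ell(\gamma)$. (4) Finally, reparametrize $\tilde\eta$ affinely (or by its own length function) so that its domain is exactly $[0,1]$, which changes neither injectivity, nor the image, nor the length. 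Set $\eta$ equal to this reparametrized map.

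Concretely, since the statement explicitly says ``an immediate consequence'' of those two propositions, I would keep the write-up short: cite Proposition 3.4 of \cite{AlbOtt17} to pass to a path with a well-behaved (continuous, monotone) length parametrization, cite Proposition 3.5 to extract the injective subpath with controlled image and length, and then note that precomposing with an increasing homeomorphism $[0,1]\to[0,1]$ (or $[0,1]\to[a,b]$) leaves all three required properties — being an arc, $\eta([0,1])\subseteq\gamma([0,1])$, and $\ell(\eta)\le\ell(\gamma)$ — invariant, since length is independent of parametrization and injectivity and image are obviously preserved. That yields the desired $\eta$.

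The main obstacle I anticipate is purely bookkeeping rather than conceptual: making sure the two propositions of \cite{AlbOtt17} are quoted with hypotheses that match our situation (in particular that no finite-length assumption is smuggled in, or, if it is, handling $\ell(\gamma)=\infty$ separately via arcwise connectedness of the Peano continuum $\gamma([0,1])$ as in step (1)), and confirming that the endpoints are genuinely preserved by the injective-selection step so that $\eta(0)=x$ and $\eta(1)=y$ rather than merely $x,y\in\eta([0,1])$. Once those citations are pinned down, the reparametrization to domain $[0,1]$ is routine and the proof is a few lines.
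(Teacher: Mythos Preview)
Your handling of the case $\ell(\gamma)=\infty$ is exactly what the paper does: invoke the Hahn--Mazurkiewicz theorem to get arcwise connectedness of the Peano continuum $\gamma([0,1])$, and then the length bound is trivial.

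For the finite-length case, however, you have guessed the content of Propositions~3.4 and~3.5 of \cite{AlbOtt17} incorrectly, and the actual mechanism is not the ``reparametrize, then excise loops'' scheme you outline. Proposition~3.5 is a \emph{multiplicity formula}: for a path $\gamma$ of finite length,
\[
\ell(\gamma)=\int_{\gamma([0,1])} m(\gamma,x)\,d\mathscr{H}^1(x),
\]
where $m(\gamma,x)=\#\gamma^{-1}(\{x\})$ and $\mathscr{H}^1$ is the $1$-Hausdorff measure; since $m\ge 1$ on the image, this gives $\ell(\gamma)\ge \mathscr{H}^1(\gamma([0,1]))$. Proposition~3.4 is the statement that any two points of a continuum $K$ with $\mathscr{H}^1(K)<\infty$ can be joined by an arc $\eta$ in $K$ with $\ell(\eta)\le \mathscr{H}^1(K)$. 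Chaining the two inequalities through $\mathscr{H}^1(\gamma([0,1]))$ gives $\ell(\eta)\le\ell(\gamma)$ with $\eta([0,1])\subseteq\gamma([0,1])$.

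So the missing idea in your plan is the passage through the $1$-Hausdorff measure of the image; neither proposition hands you an injective subpath with a direct length comparison, and your step~(3) as written has no citation to back it. Once you route the argument through $\mathscr{H}^1$, no reparametrization step is needed at all, and the endpoint and domain bookkeeping you worry about does not arise.
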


\begin{proof}
Consider first the case where $\gamma$ has finite length. 
Then $\gamma([0,1])$ is a continuum 
and by \cite[Proposition 3.5]{AlbOtt17} 
\[
\ell(\gamma)=\int_{\gamma([0,1])}m(\gamma,x)d\mathscr{H}^1(x)\ge\int_{\gamma([0,1])}d\mathscr{H}^1(x) =
\mathscr{H}^1\big(\gamma([0,1])\big)\,,
\]
where $m(\gamma,.)$ is the multiplicity of $\gamma$, $m(\gamma,x):=\#\big(\gamma^{-1}(\{x\})\big)$, and $\mathscr{H}^1$ is the 1-Hausdorff-measure.
Now by \cite[Proposition 3.4]{AlbOtt17}, $\gamma(0)$ and $\gamma(1)$ are connected by an arc $\eta$ in $\gamma([0,1])$ with 
$\ell(\eta)\le \mathscr{H}^1\big(\gamma([0,1])\big)\le \ell(\gamma)$.

If $\ell(\gamma)=\infty$, the assertion follows immediately from the Hahn-Mazurkiewicz theorem, see \cite[Section 31]{willard2004general}.
\end{proof}

\begin{definition}[Intrinsic metric, length space, quasi-convexity]\label{def:intrinsic}
Let $E\subseteq M$ and $\Gamma(x,y)$ be the set of all paths of finite length in $E$ from $x$ to $y$. 
The {\em intrinsic metric} $\rho_E$ on $E$ is defined by
\[
\rho_E(x,y):=\inf\big\{\ell(\gamma)\colon \gamma\in \Gamma(x,y)\big\}\,, \qquad(x,y\in E)\,,
\]
with the convention $\inf\emptyset=\infty$.
The metric space $(M,d)$ is a {\em length space}, iff $\rho_M=d$.
We call  $(M,d)$  {\em $C$-quasi-convex} iff there exists $C>0$ s.t.~$\rho_M(x,y)\le C d(x,y)$ for all $x,y\in M$. 
 \end{definition}

Note that $\rho_E$ is not a proper metric in that it may take the value infinity. 
Of course, one could relate $\rho_E$ to a proper metric 
\[
\tilde \rho_E (x,y)
:=\begin{cases}
\frac{\rho_E(x,y)}{1+\rho_E(x,y)}\,, &\text{if } \rho_E(x,y)<\infty\,,\\
1\,, &\text{if }  \rho_E(x,y)=\infty\,.
\end{cases}
\] 
However, we stick to $\rho_E$, as it is the more natural choice and the extended co-domain does not lead to any difficulties.

It is readily checked that, if we allow $\infty$ as the value of a metric, then  
$(E,\rho_E)$ is  a length space.

\smallskip

See \cite[Section 7]{heinonen01} and \cite{hakobyan2008} for interesting consequences of quasi-convexity in the context of Lipschitz analysis.

\begin{definition}[Intrinsically Lipschitz continuous function]\label{def:pw-lip}
Let $E\subseteq M$, let $(\msr,d_\msr)$ be a metric space
and $f\colon M\to \msr$ a function.
\begin{enumerate}
\item We call  $f$  {\em intrinsically $L$-Lipschitz continuous} 
on  $E$ 
iff
$f|_E\colon E\to \msr$ is Lipschitz continuous with respect to the
intrinsic metric $\rho_E$ on $E$ and $d_\msr$ on $\msr$ and Lipschitz constant $L$.
\item We call  $f$  {\em intrinsically Lipschitz continuous} 
on  $E$ 
iff
$f$ is intrinsically $L$-Lipschitz continuous for some $L$.
\item In the above cases we call $M\setminus E$ an {\em exception set (for intrinsic Lipschitz continuity)} of $f$. 
\end{enumerate}
\end{definition}


\begin{example}\label{ex:intLip}
Consider the function
$f\colon\R^2\longrightarrow\R$, $f(x)=\|x\|\arg(x)$. Then $f$ is not Lipschitz continuous with respect to the induced metric, since 
$\lim_{h\to 0+}f(\cos(\pi-h),\sin(\pi-h))=-\pi$ and 
$\lim_{h\to 0+}f(\cos(\pi+h),\sin(\pi+h))=\pi$ for $x_1<0$.

It is readily checked, however, that $f$ is Lipschitz continuous on $E=\R^2\backslash\{x\in\R^2: x_1<0,x_2=0\}$ w.r.t.~the intrinsic metric $\rho_E$ (note that $E$ is not quasi-convex).

Thus $f$ is intrinsically Lipschitz continuous with exception set $\Theta:=\{x\in\R^2: x_1<0,x_2=0\}$ in the sense of Definition \ref{def:pw-lip}.

The function $g\colon \R^2\longrightarrow\R$, $f(x)=\|x\|^2\arg(x)$ is only locally intrinsically Lipschitz continuous on $E$.
\end{example}

A classical method for proving Lipschitz continuity of a differentiable function also works for intrinsically Lipschitz continuity: 
\begin{example}
Let $A\subseteq \R^d$ open and let $f\colon A\to \R$ be differentiable
with $\sup_{x\in A}\|\nabla f(x) \|<\infty$. Then $f$ is intrinsically Lipschitz continuous on $A$ with Lipschitz constant $\sup_{x\in A}\|\nabla f(x) \|$. 
 
A proof can be found in  \cite[Lemma 3.6]{sz2016b}.
\end{example}

It is almost obvious, 
that a function $f\colon \R^2\to \R$, which is continuous and intrinsically Lipschitz continuous on 
$\R^2\setminus \{(x_1,x_2):x_1<0, x_2=0\}$, is Lipschitz continuous in $\R^2$.  
One can use that $\Theta:=\{(x_1,x_2):x_1<0, x_2=0\}$ does not pose a `hard' barrier, since every straight line 
connecting two points in $\R^2\setminus \Theta$ has at most one intersection point with $\Theta$ and so
one can conclude the Lipschitz continuity by approaching $\Theta$ from either side (we invite the reader to 
make this argument rigorous -- such a kind of argument will be used also in the proof of Theorem \ref{thm:main-th1}).   

To make the elementary property of `not being a hard barrier' precise, we define at this point 
the notion of permeability.  

\begin{definition}\label{def:permeable}
Let $E,\Theta\subseteq M$. 
\begin{enumerate}
\item The $\Theta$-{\em intrinsic metric} $\rho^\Theta_E$ on $E$ is defined by
\[
\rho^\Theta_E(x,y):=\inf\big\{\ell(\gamma)\colon \gamma\in \Gamma^\Theta(x,y)\big\} \,
\]
where $\Gamma^\Theta(x,y)$ is the set of all paths $\gamma\colon[a,b]\to M$ of finite length in $E$ from $x$ to $y$, such that $\overline{\{\gamma(t):t\in{[a,b]}\}\cap \Theta}$ is at most countable. 
(Again, we use the convention that $\inf\emptyset=\infty$.)
\item The $\Theta$-{\em finite intrinsic metric} $\rho^{\Theta,\textsc{fin}}_E$ on $E$ is defined by
\[
\rho^{\Theta,\textsc{fin}}_E(x,y):=\inf\big\{\ell(\gamma)\colon \gamma\in \Gamma^{\Theta,\textsc{fin}}(x,y)\big\} \,
\]
where $\Gamma^{\Theta,\textsc{fin}}(x,y)$ is the set of all paths $\gamma\colon[a,b]\to M$ of finite length in $E$ from $x$ to $y$, such that $\{\gamma(t):t\in{[a,b]}\}\cap \Theta$ is finite. 
\item We call $\Theta$ 
{\em permeable relative to $M$} iff $\rho_{M}=\rho_M^\Theta$.
\item We call $\Theta$ 
{\em finitely permeable relative to $M$} iff $\rho_{M}=\rho_M^{\Theta,\textsc{fin}}$.
\end{enumerate}
When the ambient space $(M,d)$ is understood and there is no danger of confusion, we simply
say $\Theta$ is (finitely) permeable.
\end{definition}

\begin{remark}
A set $\Theta\subseteq M$ is (finitely) permeable iff for any $x,y\in M$ and every $\varepsilon>0$ there exists a path
$\gamma$ from $x$ to $y$ in $M$ with $\ell(\gamma)<\rho_M(x,y)+\varepsilon$ and such that
\(
\overline{\{\gamma(t):t\in{[a,b]}\}\cap \Theta}
\) is at most countable (finite). Clearly, every finitely permeable set is permeable.

The notion of permeability is related to that of metrical removability \cite[Definition 1.1]{rajala2019}: A set $\Theta\in M$ is 
{\em metrically removable} if for all $x,y\in M$ and all $\varepsilon>0$ there exists a path $\gamma$ in $(M\setminus \Theta)\cup\{x,y\}$ from $x$ to $y$ with $\ell(\gamma)<d(x,y)+\varepsilon$. Since 
$d(x,y)\le \rho(x,y)$ it follows that every metrically removable set is finitely permeable.

Lemma 3.7 in \cite{rajala2019} states that if $M\subseteq\R^n$, then  a subset $\Theta\subseteq M$  is metrically removable if and only if $\rho_M=\rho_{M\setminus \Theta}$. Therefore, for subsets $M$ of the $\R^n$ with 
$\rho_M=d$  (i.e., $M$ is a length space), metrical removability corresponds to 
Definition \ref{def:permeable}, where `countable' or `finite' is replaced by `empty'.
\end{remark}

\begin{proposition}\label{prop:empty-interior} Let $\#M\ge 2$ and $\rho_M(x,y)<\infty$ for all $x,y\in M$.
If $\Theta\subseteq M$ is permeable, then it has no interior point with respect to the original metric $d$. 
\end{proposition}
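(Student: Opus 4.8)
The plan is to argue by contradiction: assume $z$ is an interior point of $\Theta$ with respect to $d$, so that $B(z,r)\subseteq\Theta$ for some $r>0$, and derive a violation of permeability. Since $\#M\ge 2$ we may pick $y\in M$ with $y\ne z$, and since $\rho_M(z,y)<\infty$ and $\rho_M=\rho_M^\Theta$, the infimum defining $\rho_M^\Theta(z,y)$ is finite, hence $\Gamma^\Theta(z,y)\ne\emptyset$. Thus there is a path $\gamma\colon[0,1]\to M$ from $z$ to $y$ (after an affine reparametrisation of its domain to $[0,1]$) whose trace meets $\Theta$ in a set with at most countable closure, i.e.\ $\overline{\gamma([0,1])\cap\Theta}$ is at most countable. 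The goal is then to contradict this by producing an uncountable subset of $\overline{\gamma([0,1])\cap\Theta}$.

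The natural candidate is the initial portion of $\gamma$ before it first leaves the ball $B(z,r)$. I would set $s:=\inf\{t\in[0,1]:\gamma(t)\notin B(z,r)\}$ with the convention $\inf\emptyset=1$. Because $\gamma(0)=z$ lies in the \emph{open} ball $B(z,r)$, continuity forces $s>0$; and by definition of the infimum, $\gamma(t)\in B(z,r)\subseteq\Theta$ for every $t\in[0,s)$, so $\gamma([0,s))\subseteq\gamma([0,1])\cap\Theta$ and therefore $\overline{\gamma([0,s))}\subseteq\overline{\gamma([0,1])\cap\Theta}$. Since $s>0$, continuity also gives $\gamma(s)\in\overline{\gamma([0,s))}$, so in fact $\gamma([0,s])\subseteq\overline{\gamma([0,1])\cap\Theta}$. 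It then remains to check that $\gamma([0,s])$ is uncountable, for which it suffices that it contains two distinct points. If the infimand is empty then $\gamma(s)=\gamma(1)=y\ne z$; if it is nonempty, a short openness argument shows $\gamma(s)\notin B(z,r)$, while continuity from the left gives $d(z,\gamma(s))\le r$, so $d(z,\gamma(s))=r>0$ and again $\gamma(s)\ne z$.

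Finally I would invoke the elementary fact that a connected metric space $X$ with two distinct points $a\ne b$ is uncountable: the $1$-Lipschitz map $x\mapsto d(x,a)$ carries $X$ onto a connected subset of $\R$ — an interval — containing $0=d(a,a)$ and $d(a,b)>0$, hence containing the whole segment $[0,d(a,b)]$, which is uncountable. Applying this with $X=\gamma([0,s])$, a continuous, hence connected, image of the interval $[0,s]$ containing the distinct points $z$ and $\gamma(s)$, shows $\overline{\gamma([0,1])\cap\Theta}\supseteq\gamma([0,s])$ is uncountable — contradicting the choice of $\gamma$. Hence $\Theta$ has no $d$-interior point.

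I do not expect any real obstacle here; the only points that need care are the bookkeeping around $s$ (that $s>0$, and that $\gamma(s)\notin B(z,r)$ when the infimand is nonempty, both immediate from the openness of $B(z,r)$), and remembering to reparametrise $\gamma$ to $[0,1]$. Both hypotheses are used essentially: $\#M\ge 2$ supplies a target point $y\ne z$, and $\rho_M(z,y)<\infty$ together with permeability guarantees that a path with at most countable $\Theta$-trace-closure exists in the first place.
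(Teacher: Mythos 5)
Your proof is correct, and it follows a genuinely different route from the paper. The paper invokes Lemma~\ref{lem:injective-curve} to replace an arbitrary finite-length path $\gamma$ by an \emph{injective} arc $\eta$ with image inside $\gamma([0,1])$, and then observes that for small $\delta>0$ the set $\eta([0,\delta])\subseteq\gamma([0,1])\cap\Theta$ is uncountable simply because $\eta$ is injective on the interval $[0,\delta]$. You instead work directly with $\gamma$ itself, isolating the initial piece $\gamma([0,s])$ before the path first exits the ball $B(z,r)$, and then use the elementary fact that a connected metric space containing two distinct points is uncountable (by pushing forward along $x\mapsto d(x,a)$ to a nondegenerate interval in $\R$). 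The logical scaffolding also differs superficially — the paper shows $\Gamma^\Theta(x,y)=\emptyset$ directly, you argue by contradiction from $\Gamma^\Theta(z,y)\ne\emptyset$ — but that is equivalent. What your route buys is independence from Lemma~\ref{lem:injective-curve}, which in the paper rests on the nontrivial results of Alberti--Ottolini and, in the infinite-length case, the Hahn--Mazurkiewicz theorem; your connectedness argument is entirely elementary. What the paper's route buys is brevity once Lemma~\ref{lem:injective-curve} is already in hand, as it is elsewhere in that section. Both proofs use both hypotheses ($\#M\ge2$ to supply a second point, $\rho_M<\infty$ together with permeability to supply a candidate path) in exactly the same way.
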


\begin{proof}
Let $x$ be an interior point of $\Theta$ and $y\in M\setminus\{x\}$. 
By our assumption
there exists a path $\gamma\colon [0,1]\to M$ from $x$ to $y$ with finite length. By Lemma \ref{lem:injective-curve} there exists an arc $\eta\colon [0,1]\to \gamma([0,1])$ from $x$ to $y$ with $\ell(\eta)\le\ell(\gamma)$.
Since $x$ is an interior point of $\Theta$, there exists $r>0$ such that the ball $B_r(x):=\{y\in M: d(x,y)<r\}\subseteq \Theta$. By the continuity of $\eta$ there
exists $\delta>0$ such that $\eta([0,\delta])\subseteq B_r(x)\subseteq\Theta$. 
Thus $\eta([0,\delta])\subseteq\{\gamma(t):t\in{[0,1]}\}\cap \Theta$, such that the latter set is uncountable.
Since $\gamma$ was an arbitrary path in $M$ from $x$ to $y$, it follows that
 $\Gamma^\Theta(x,y)=\emptyset$,
and therefore $\rho_M^\Theta(x,y)=\infty>\rho_M(x,y)$.
\end{proof}

We will show in Section \ref{sec:smf} that all sufficiently regular sub-manifolds of the $\R^d$ of dimension smaller than $d$
are finitely permeable.
Next we show  that (finite) permeability transfers to subsets.

\begin{proposition}\label{prop:subset}
Let $\Theta_0\subseteq \Theta\subseteq M$. If $\Theta$ is (finitely) permeable, then $\Theta_0$ is (finitely) permeable.
\end{proposition}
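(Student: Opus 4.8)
The plan is to compare the relevant families of admissible paths directly and then take infima of lengths. Fix $x,y\in M$. The first step is to observe that every path $\gamma\colon[a,b]\to M$ of finite length in $M$ from $x$ to $y$ for which $\overline{\{\gamma(t):t\in[a,b]\}\cap\Theta}$ is at most countable also satisfies that $\overline{\{\gamma(t):t\in[a,b]\}\cap\Theta_0}$ is at most countable. Indeed, since $\Theta_0\subseteq\Theta$ we have
\[
\{\gamma(t):t\in[a,b]\}\cap\Theta_0\ \subseteq\ \{\gamma(t):t\in[a,b]\}\cap\Theta\ \subseteq\ \overline{\{\gamma(t):t\in[a,b]\}\cap\Theta},
\]
and taking closures gives $\overline{\{\gamma(t):t\in[a,b]\}\cap\Theta_0}\subseteq\overline{\{\gamma(t):t\in[a,b]\}\cap\Theta}$, because the right-hand side is already closed; a subset of an at most countable set is at most countable. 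Hence $\Gamma^\Theta(x,y)\subseteq\Gamma^{\Theta_0}(x,y)$.

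On the other hand, every path in $\Gamma^{\Theta_0}(x,y)$ is in particular a path of finite length in $M$ from $x$ to $y$, so $\Gamma^{\Theta_0}(x,y)\subseteq\Gamma(x,y)$. Taking infima of lengths over these three nested families reverses the inclusions and yields
\[
\rho_M(x,y)\ \le\ \rho_M^{\Theta_0}(x,y)\ \le\ \rho_M^{\Theta}(x,y)\,.
\]
Since $\Theta$ is permeable, $\rho_M^\Theta=\rho_M$, and the chain of inequalities forces $\rho_M^{\Theta_0}=\rho_M$; that is, $\Theta_0$ is permeable.

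For the finitely permeable case the argument is verbatim the same, with ``$\overline{\{\gamma(t):t\in[a,b]\}\cap\Theta}$ at most countable'' replaced by ``$\{\gamma(t):t\in[a,b]\}\cap\Theta$ finite'', using that a subset of a finite set is finite. This gives $\Gamma^{\Theta,\textsc{fin}}(x,y)\subseteq\Gamma^{\Theta_0,\textsc{fin}}(x,y)\subseteq\Gamma(x,y)$, hence $\rho_M\le\rho_M^{\Theta_0,\textsc{fin}}\le\rho_M^{\Theta,\textsc{fin}}=\rho_M$, so $\Theta_0$ is finitely permeable.

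There is essentially no obstacle here; the only point that warrants a moment's care is the closure step in the countable case, where one must use that $\overline{\{\gamma(t):t\in[a,b]\}\cap\Theta}$ is closed in order to conclude $\overline{\{\gamma(t):t\in[a,b]\}\cap\Theta_0}\subseteq\overline{\{\gamma(t):t\in[a,b]\}\cap\Theta}$, rather than merely the inclusion of the unclosed sets.
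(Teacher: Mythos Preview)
Your proof is correct and follows essentially the same approach as the paper's: both arguments rest on the observation that $\overline{\gamma([a,b])\cap\Theta_0}\subseteq\overline{\gamma([a,b])\cap\Theta}$, and hence any path admissible for $\Theta$ is admissible for $\Theta_0$. The only cosmetic difference is that the paper phrases this via an $\varepsilon$-argument, whereas you pass directly through the inclusion $\Gamma^\Theta(x,y)\subseteq\Gamma^{\Theta_0}(x,y)\subseteq\Gamma(x,y)$ and take infima.
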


\begin{proof}
Let $x,y\in M$ and $\varepsilon>0$. 
There exists $\gamma\colon [a,b]\to M$ such that 
$\ell(\gamma)<\rho_M(x,y)+\varepsilon$ and $\gamma([a,b])\cap \Theta$ has
countable closure. Since $\Theta_0\subseteq \Theta$, 
$\overline{\gamma([a,b])\cap \Theta_0}\subseteq \overline{\gamma([a,b])\cap \Theta}$. Therefore $\overline{\gamma([a,b])\cap \Theta_0}$ is countable, 
hence $\rho^{\Theta_0}_M(x,y)\le \rho_M(x,y)+\varepsilon$ from which the claim 
follows.

The `finitely permeable' case follows from similar considerations.
\end{proof}

\begin{proposition}\label{prop:subset-closed}
Let $\Theta_0\subseteq \Theta\subseteq M$. If $\Theta$ is (finitely) permeable relative to $(M,d)$ and $\Theta_0$ is closed in $M$, then $\Theta\setminus \Theta_0$ is (finitely) permeable relative to $(M\setminus \Theta_0,d)$.
\end{proposition}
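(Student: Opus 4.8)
The plan is to take an arbitrary pair of points $x,y\in M\setminus\Theta_0$ and a positive $\varepsilon$, and produce a path in $M\setminus\Theta_0$ joining them whose length is at most $\rho_{M\setminus\Theta_0}(x,y)+\varepsilon$ and whose intersection with $\Theta\setminus\Theta_0$ has at most countable (resp.\ finite) closure \emph{in $M\setminus\Theta_0$}. First I would observe that, since we are working relative to the subspace $(M\setminus\Theta_0,d)$, the relevant intrinsic distance to beat is $\rho_{M\setminus\Theta_0}(x,y)$, not $\rho_M(x,y)$; so I start from a path $\gamma\colon[a,b]\to M\setminus\Theta_0$ of finite length with $\ell(\gamma)<\rho_{M\setminus\Theta_0}(x,y)+\varepsilon/2$ (if no such finite-length path exists the inequality to be proven is trivial). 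The key point is then that $\gamma$ is simultaneously a path in $M$, so the hypothesis that $\Theta$ is (finitely) permeable relative to $M$ can be applied — but only after replacing $\gamma$ by a path that actually stays inside $M\setminus\Theta_0$ while approximating $\gamma$ well, which is exactly what permeability of $\Theta$ in $M$ does not a priori guarantee. This is the main obstacle, and I address it as follows.

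Since $\Theta_0$ is closed in $M$ and $\gamma([a,b])$ is a compact subset of $M\setminus\Theta_0$, the distance $\delta:=d(\gamma([a,b]),\Theta_0)$ is strictly positive. Now apply (finite) permeability of $\Theta$ in $M$ to the endpoints $x,y$: there is a path $\tilde\gamma$ in $M$ from $x$ to $y$ with $\ell(\tilde\gamma)<\rho_M(x,y)+\varepsilon/2\le\rho_{M\setminus\Theta_0}(x,y)+\varepsilon/2$ — wait, that is the wrong direction, since $\rho_M\le\rho_{M\setminus\Theta_0}$ only gives an upper bound going the convenient way, so in fact $\rho_M(x,y)+\varepsilon/2\le\rho_{M\setminus\Theta_0}(x,y)+\varepsilon/2$ is automatic and this is fine. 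However $\tilde\gamma$ may leave $M\setminus\Theta_0$. To fix this I instead apply permeability of $\Theta$ \emph{to the path $\gamma$ itself}: by the Remark following Definition \ref{def:permeable}, I may choose $\tilde\gamma$ with $\ell(\tilde\gamma)<\ell(\gamma)+\varepsilon/2$, but I additionally need $\tilde\gamma$ to remain within the $\delta$-neighborhood of $\gamma([a,b])$. The honest route is therefore a \emph{localization} argument: subdivide $\gamma$ into finitely many subarcs each of diameter less than $\delta/3$, and on each subarc apply permeability of $\Theta$ relative to $M$ to its two endpoints, obtaining a near-geodesic replacement; by the second triangle-type estimate the replacement subarc has length close to the original, hence diameter bounded by, say, $\delta/2$, so it stays inside the $\delta$-neighborhood of $\gamma([a,b])$ and thus inside $M\setminus\Theta_0$. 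Concatenating these replacements yields a path $\hat\gamma$ in $M\setminus\Theta_0$ from $x$ to $y$ with $\ell(\hat\gamma)$ controlled by $\ell(\gamma)+\varepsilon/2<\rho_{M\setminus\Theta_0}(x,y)+\varepsilon$.

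It remains to control the exception set. Each replacement subarc $\hat\gamma_i$ has $\overline{\hat\gamma_i([a_i,b_i])\cap\Theta}$ countable (resp.\ finite) by construction, and since $\hat\gamma_i([a_i,b_i])\cap(\Theta\setminus\Theta_0)\subseteq\hat\gamma_i([a_i,b_i])\cap\Theta$, the intersection of $\hat\gamma_i$ with $\Theta\setminus\Theta_0$ has at most countable (finite) closure in $M$, a fortiori in $M\setminus\Theta_0$. A finite union of such sets is again at most countable (finite), and the closure of a finite union is the union of the closures, so $\overline{\hat\gamma([a,b])\cap(\Theta\setminus\Theta_0)}$ computed in $M\setminus\Theta_0$ is at most countable (resp.\ finite). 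This shows $\rho^{\Theta\setminus\Theta_0,\;(\textsc{fin})}_{M\setminus\Theta_0}(x,y)\le\rho_{M\setminus\Theta_0}(x,y)+\varepsilon$ for every $\varepsilon>0$, and the reverse inequality is immediate, giving equality; hence $\Theta\setminus\Theta_0$ is (finitely) permeable relative to $(M\setminus\Theta_0,d)$. The finite case is verbatim the same with ``countable'' replaced by ``finite'' throughout, noting that taking closures is harmless there because a finite set is already closed. I expect the only genuinely delicate point to be the localization step ensuring the replacement path does not stray into $\Theta_0$; everything else is bookkeeping with the Remark after Definition \ref{def:permeable} and with the fact that closure commutes with finite unions.
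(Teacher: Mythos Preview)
Your approach is essentially the same as the paper's: start from a near-optimal path $\gamma$ in $M\setminus\Theta_0$, use compactness of $\gamma([a,b])$ and closedness of $\Theta_0$ to obtain a positive separation $\delta$, subdivide $\gamma$ into short pieces, replace each piece via permeability of $\Theta$ in $M$, check the replacements stay within the $\delta$-tube (hence in $M\setminus\Theta_0$), and concatenate. The paper carries out exactly this localization-and-replacement argument.

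One small imprecision worth tightening: you subdivide $\gamma$ into subarcs of \emph{diameter} less than $\delta/3$ and then claim the replacement has ``length close to the original''. But the replacement's length is close to $\rho_M$ of the endpoints, which is bounded by the \emph{arc length} of the original subarc, not by its diameter; a subarc can have tiny diameter and large length. The fix is to subdivide by arc length (possible since $\ell(\gamma)<\infty$), which is what the paper does implicitly when it asks for $\rho_{M\setminus\Theta_0}(\gamma(t_{k-1}),\gamma(t_k))<\delta/2$. With that adjustment your argument goes through verbatim.
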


\begin{proof} We only treat the permeable case, the finitely permeable one being almost identical.
 
Let $x,y\in M\setminus \Theta_0$ and let $\varepsilon>0$. If $\rho_{M\setminus \Theta_0}(x,y)=\infty$ there is nothing to show. Otherwise, there exists a path 
$\gamma\colon [0,1]\to M\setminus\Theta_0$ with $\gamma(0)=x$, $\gamma(1)=y$ and 
$\ell(\gamma)<\rho_{M\setminus\Theta_0}(x,y)+\frac{\varepsilon}{2}$. Since $\gamma([0,1])$ is compact and 
$\Theta_0$ is closed in $M$ there exists $\delta>0$ such that 
\[
\left\{z\in M\colon \inf_{t\in [0,1]}d\big(\gamma(t),z\big)<\delta\right\}\cap \Theta_0=\emptyset\,.
\]
Next we can find $0=t_0<t_1<\dots<t_n=1$ such that 
$\rho_{M\setminus \Theta_0}(\gamma(t_{k-1}),\gamma(t_k))<\frac{\delta}{2}$ for all $k=1,\ldots,n$. Since $\Theta$ is permeable
relative to $M$, there exist $\eta_1,\ldots,\eta_n\colon [0,1]\to M$ with
$\eta_k(0)=\gamma(t_{k-1})$, $\eta_k(1)=\gamma(t_k)$, 
\begin{align*}
\ell(\eta_k) 
&< \rho_M(\eta_k(0),\eta_k(1))+\min\left(\frac{\varepsilon}{2n},\frac{\delta}{2}\right)\\  
\end{align*}
and $\eta_k([0,1])\cap \Theta$ has countable closure for every $k\in\{1,\dots,n\}$. Now 
\begin{align*}
\ell(\eta_k) &< \rho_{M\setminus \Theta_0}(\gamma(t_{k-1}),\gamma(t_k))+\min\left(\frac{\varepsilon}{3n},\frac{\delta}{2}\right)<\delta\,,
\end{align*}
so that if $t\in[0,1]$,
$d(\eta_k(t),\eta_k(0))\le \ell(\eta_k)< \delta$, and therefore $\eta_k(t)\notin \Theta_0$. Therefore 
$\eta\colon [0,1]\to M$, the concatenation of the paths $\eta_1,\ldots,\eta_n$ is a path in $M\setminus \Theta_0$ with 
$\ell(\eta)<\ell(\gamma)+\frac{\varepsilon}{2}<\rho_{M\setminus\Theta_0}(x,y)+\varepsilon$ such that the closure of $\eta([0,1])\cap (\Theta\setminus\Theta_0)$ is countable.
\end{proof}

We now state our first main result.

\begin{theorem}\label{thm:main-th1}
Let $\Theta\subseteq M$ be permeable, $(\msr,d_\msr)$ a  metric space. Then every continuous function $f\colon M\to \msr$,
which is intrinsically $L$-Lipschitz continuous on $E=M\setminus \Theta$, is intrinsically $L$-Lipschitz continuous on the whole of $M$.
\end{theorem}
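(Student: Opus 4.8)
The plan is to prove the statement by transfinite induction on the Cantor–Bendixson rank of the exception set $\Theta$ (or rather of a suitable closed reduction of it), exactly as announced in the introduction. First I would reduce to the essential case. We want to show $d_\msr(f(x),f(y))\le L\,\rho_M(x,y)$ for all $x,y\in M$; if $\rho_M(x,y)=\infty$ there is nothing to prove, so fix $x,y$ with $\rho_M(x,y)<\infty$ and fix $\varepsilon>0$. Since $\Theta$ is permeable, choose a path $\gamma\colon[0,1]\to M$ from $x$ to $y$ with $\ell(\gamma)<\rho_M(x,y)+\varepsilon$ and such that $C:=\overline{\gamma([0,1])\cap\Theta}$ is at most countable; by \autoref{lem:injective-curve} we may take $\gamma$ to be an arc, which makes the topology on the parameter interval agree with that on the image. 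It suffices to show $d_\msr(f(x),f(y))\le L\,\ell(\gamma)$, since then letting $\varepsilon\to0$ finishes the proof. Replacing $\Theta$ by the set $C$ (note $f$ is still intrinsically $L$-Lipschitz on $M\setminus C\supseteq M\setminus\Theta$ — wait, one must be slightly careful here: $f$ need only be intrinsically Lipschitz on $M\setminus\Theta$, and $\gamma$ may re-enter $\Theta\setminus C$; but along the particular arc $\gamma$ the only points of $\Theta$ that $\gamma$ meets lie in $C$, so all estimates we make will only ever use the hypothesis on points of $\gamma$ outside $C$), we are reduced to the following situation: a countable \emph{closed} subset $C$ of the arc $\gamma([0,1])$, and we must estimate $d_\msr(f(x),f(y))$ by $L\,\ell(\gamma)$.

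Now comes the induction. A nonempty countable closed subset of a metric space has isolated points (Cantor–Bendixson: a nonempty perfect set is uncountable), and its Cantor–Bendixson derivative $C'$ (the set of accumulation points) is again countable and closed, strictly smaller; iterating transfinitely, there is a countable ordinal $\alpha$ with $C^{(\alpha)}=\emptyset$. I would run transfinite induction on this rank $\alpha$. For the \textbf{base case} $C=\emptyset$: then $\gamma$ lies entirely in $E=M\setminus\Theta$ — more precisely its image avoids $C$, so each subpath of $\gamma$ is a finite-length path in $M$ whose image meets $\Theta$ only in $C=\emptyset$, hence lies in $E$ — and intrinsic $L$-Lipschitzness on $E$ applied along $\gamma$ itself gives $d_\msr(f(x),f(y))\le L\,\rho_E(x,y)\le L\,\ell(\gamma)$. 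For the \textbf{isolated-point / successor step}: if $p\in C$ is isolated in $C$, pick a parameter $t_0$ with $\gamma(t_0)=p$ and a small parameter-neighborhood $(t_0-\eta,t_0+\eta)$ whose $\gamma$-image meets $C$ only in $p$. The two subarcs $\gamma|_{[t_0-\eta,t_0+\eta]}$ restricted to either side of $t_0$ have images in $E\cup\{p\}$; using continuity of $f$ at $p$ and intrinsic $L$-Lipschitzness on $E$ on the pieces $[t_0-\eta,s]$ and $[s,t_0+\eta]$ as $s\to t_0$, one shows $f$ does not jump across $p$: concretely, $d_\msr\big(f(\gamma(t_0-\eta)),f(\gamma(t_0+\eta))\big)\le L\,\ell\big(\gamma|_{[t_0-\eta,t_0+\eta]}\big)$. (This is precisely the ``approach $\Theta$ from either side'' argument the authors describe informally before \autoref{def:permeable}.) Doing this at each isolated point and splicing with the base-case estimate on the complementary closed pieces — where $C$ has been reduced — handles passing from rank $\alpha$ to rank $\alpha+1$ once we know the result for rank $\le\alpha$.

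The delicate point is the \textbf{limit-ordinal step}, and this is what I expect to be the main obstacle. Here $C^{(\lambda)}=\bigcap_{\beta<\lambda}C^{(\beta)}$ is nonempty with no interior in $C$, and one cannot simply ``remove one point.'' The strategy I would use: work directly on the compact parameter interval $[0,1]$ with the pullback $K:=\gamma^{-1}(C)$, which is countable and closed (since $\gamma$ is an arc). Define the function $\varphi(t):=d_\msr(f(\gamma(0)),f(\gamma(t)))$ and, more usefully, compare $d_\msr(f(\gamma(s)),f(\gamma(t)))$ with $L\cdot\ell(\gamma|_{[s,t]})$ for $s<t$; call an interval ``good'' if this inequality holds. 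By the base case, every subinterval of $[0,1]$ whose $\gamma$-image avoids $C$ is good, and goodness is preserved under concatenation and under taking limits $s\downarrow s_0$, $t\uparrow t_0$ by continuity of $f$ and lower semicontinuity... actually one must use that $t\mapsto\ell(\gamma|_{[0,t]})$ is continuous (finite total length), which lets length behave well in limits. I would then show the set of $t$ such that $[0,t]$ is good is closed and, using the successor step at points of $K$ together with a transfinite/Zorn argument over the structure of $K$, that it is all of $[0,1]$. The cleanest packaging is probably: prove by transfinite induction on the CB-rank of $K$ the statement ``for every countable closed $K\subseteq[0,1]$ and every arc $\gamma$ with $\gamma^{-1}(\Theta)\subseteq K$ and finite length, $[0,1]$ is good,'' where the limit step is handled by noting $K^{(\lambda)}$ is \emph{nowhere dense} and countable, covering its complement in $[0,1]$ by countably many open intervals on which the rank has dropped, summing the length estimates over these (the total length being finite makes the sum telescope to $\ell(\gamma)$), and finally using that $K^{(\lambda)}$ itself, being countable closed of strictly smaller rank, is handled by the inductive hypothesis applied one rank down — a standard, if fiddly, maneuver. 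I would double-check that the countability of $C$ (rather than finiteness) is genuinely needed only to guarantee the CB-rank argument terminates, and that nothing in the successor/limit steps secretly used finiteness.
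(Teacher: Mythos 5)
Your proposal follows essentially the same route as the paper: pass to a permeable path $\gamma$ from $x$ to $y$ with countable $A_\gamma:=\overline{\gamma([a,b])\cap\Theta}$, replace $\gamma$ by an arc via Lemma~\ref{lem:injective-curve} parametrized by arc length, and prove that $f\circ\gamma$ is $L$-Lipschitz on the parameter interval by transfinite induction over the Cantor--Bendixson derivatives of the closed countable set $A:=\gamma^{-1}(A_\gamma)$. Your base case (components of $[0,b]\setminus A$) and successor step (remove isolated points, split $[r,s]$ at the finitely many remaining points, telescope) match the paper's. The genuine gap is exactly where you flag uncertainty: the limit-ordinal step.

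The sketch you give there does not work. You propose covering the complement of $K^{(\lambda)}$ by open intervals, summing length estimates that ``telescope'', and then ``using that $K^{(\lambda)}$ itself, being countable closed of strictly smaller rank, is handled by the inductive hypothesis applied one rank down.'' But if $\lambda$ is the CB-rank of $K$, then $K^{(\lambda)}=\emptyset$ and there is nothing to handle one rank down; moreover, for a \emph{compact} countable set the CB-rank is never a limit ordinal at all (a decreasing chain of nonempty compacta has nonempty intersection), so this framing misidentifies what has to be shown. What the induction must actually pass through are limit ordinals $\alpha$ strictly below the rank, at which $H^\alpha(A)=\bigcap_{\beta<\alpha}H^\beta(A)$ is still nonempty and one must show $f\circ\gamma$ is $L$-Lipschitz on the closure of each component $C_\alpha$ of $[0,b]\setminus H^\alpha(A)$. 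The paper handles this by a clean compactness argument, with no infinite summing: for $[r,s]\subseteq C_\alpha^\circ$, the compact set $[r,s]$ has positive distance from $H^\alpha(A)$, so $\{H^\beta(A)\cap[r,s]\}_{\beta<\alpha}$ is a decreasing family of compacta with empty intersection, hence already $H^{\beta_0}(A)\cap[r,s]=\emptyset$ for some $\beta_0<\alpha$; being connected and disjoint from $H^{\beta_0}(A)$, the interval $[r,s]$ sits inside a single component $C_{\beta_0}$ of $[0,b]\setminus H^{\beta_0}(A)$, and the induction hypothesis applies directly, after which one takes closures by continuity of $f$. Your proposal needs this (or an equivalent) argument in place of the telescoping-sums sketch; the vaguer ``closed set of good $t$ plus a Zorn argument'' alternative you mention is also not a complete argument as stated.
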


For the proof of this result we use the following classical theorem, see for example \cite[Theorem 6.11]{kechris1995classical}: 

\begin{theorem}[Cantor-Bendixson]\label{thm:cantor-bendixson}
Let $M$ be polish. 
For every closed $A\subseteq M$ denote by $I(A)$ are the isolated points in $A$ and
$H(A):=A\setminus I(A)$.
For every ordinal $\alpha$ we set
\[
H^{\alpha}(A):=
\begin{cases}
A&\text{if $\alpha=0$}\,,\\
H\big(H^{\beta}(A)\big)\,, & \text{if $\alpha$  is the successor of $\beta$} \,,\\
\bigcap_{\beta<\alpha} H^\beta(A)\,,& \text{if $\alpha$  is a limit ordinal}\,.
\end{cases}
\]
Then for such a closed $A\subseteq M$ there exists a countable ordinal $\alpha_0$  such that for all  $\alpha\geq \alpha_0:H^{\alpha}(A)=H^{\alpha_0}(A)$, i.e.,
$H^{\alpha_0}(A)$ is a perfect set. The smallest such ordinal $\alpha_0$
is called the {\em Cantor-Bendixson rank} of $A$.

In particular, $H^{\alpha_0}(A)=\emptyset$ for some countable ordinal $\alpha_0$ iff $A$ 
is countable.
\end{theorem}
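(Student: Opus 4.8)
The plan is to run the classical transfinite-recursion argument, whose substance is a counting argument that exploits the second-countability of a Polish space. First I would record the routine structural facts by transfinite induction on $\alpha$: every $H^\alpha(A)$ is closed (the set of non-isolated points of a closed set is closed, and an arbitrary intersection of closed sets is closed), and the family $(H^\alpha(A))_\alpha$ is non-increasing, i.e.\ $H^\beta(A)\subseteq H^\alpha(A)$ whenever $\alpha\le\beta$. A second short induction shows that if $H^{\alpha+1}(A)=H^\alpha(A)$ for some $\alpha$, then $H^\beta(A)=H^\alpha(A)$ for every $\beta\ge\alpha$; hence it suffices to produce a countable ordinal $\alpha_0$ with $H^{\alpha_0+1}(A)=H^{\alpha_0}(A)$ and then to observe that $H^{\alpha_0}(A)$ is perfect.

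Next I would fix a countable base $\mathcal{U}$ of $M$. A strictly decreasing transfinite chain of subsets of the set $A$ cannot be indexed by all ordinals, so there is a least ordinal $\alpha_0$ with $H^{\alpha_0+1}(A)=H^{\alpha_0}(A)$. By minimality of $\alpha_0$, for every $\alpha<\alpha_0$ the inclusion $H^{\alpha+1}(A)\subsetneq H^\alpha(A)$ is strict, so we can choose a point $x_\alpha\in H^\alpha(A)\setminus H^{\alpha+1}(A)=I(H^\alpha(A))$ and, since $x_\alpha$ is isolated in $H^\alpha(A)$, a basic open set $U_\alpha\in\mathcal{U}$ with $U_\alpha\cap H^\alpha(A)=\{x_\alpha\}$. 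The key point is that $\alpha\mapsto U_\alpha$ is injective on $\{\alpha:\alpha<\alpha_0\}$: if $\alpha<\beta<\alpha_0$, then $x_\beta\in H^\beta(A)\subseteq H^{\alpha+1}(A)$, so $x_\beta\ne x_\alpha$, while $x_\beta\in H^\beta(A)\subseteq H^\alpha(A)$; since $U_\alpha$ meets $H^\alpha(A)$ only in $x_\alpha$, this forces $x_\beta\notin U_\alpha$, whereas $x_\beta\in U_\beta$, so $U_\alpha\ne U_\beta$. Hence $\{\alpha:\alpha<\alpha_0\}$ injects into the countable set $\mathcal{U}$, which means $\alpha_0$ is a countable ordinal. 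Finally, at stage $\alpha_0$ we have $I(H^{\alpha_0}(A))=H^{\alpha_0}(A)\setminus H^{\alpha_0+1}(A)=\emptyset$, so $H^{\alpha_0}(A)$ is closed without isolated points, i.e.\ perfect, and by the stabilization remark $H^\alpha(A)=H^{\alpha_0}(A)$ for all $\alpha\ge\alpha_0$.

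For the concluding equivalence, suppose first $H^{\alpha_0}(A)=\emptyset$. For each $x\in A$ let $\beta(x)$ be the least ordinal with $x\notin H^{\beta(x)}(A)$; it exists and is $\le\alpha_0$, and it is neither $0$ nor a limit (at a limit stage $H^\lambda(A)=\bigcap_{\gamma<\lambda}H^\gamma(A)$, so $x$ would already have left an earlier $H^\gamma(A)$), hence $\beta(x)=\gamma+1$ with $x\in H^\gamma(A)\setminus H^{\gamma+1}(A)=I(H^\gamma(A))$ and $\gamma<\alpha_0$. Thus $A=\bigcup_{\gamma<\alpha_0}I(H^\gamma(A))$. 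Each $I(H^\gamma(A))$ is a discrete subspace of $M$, so it is countable (separate its points by distinct members of $\mathcal{U}$), and $\alpha_0$ is countable, so $A$ is a countable union of countable sets and therefore countable. Conversely, if $A$ is countable then $H^{\alpha_0}(A)\subseteq A$ is a countable, closed, perfect subset of $M$; being closed in a Polish space it is itself Polish, and a nonempty perfect Polish space has cardinality continuum by the perfect set theorem, so $H^{\alpha_0}(A)$ must be empty.

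The real content lies in the counting argument of the second step --- deducing countability of the Cantor--Bendixson rank from second-countability via the injection $\alpha\mapsto U_\alpha$ --- and in the appeal to the perfect set theorem in the last step; the remaining steps are bookkeeping with transfinite recursion, where one must be careful to treat limit stages correctly.
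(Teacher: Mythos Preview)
Your proof is correct and follows the standard argument for the Cantor--Bendixson theorem. Note, however, that the paper does not prove this theorem at all: it is stated without proof and attributed to \cite[Theorem 6.11]{kechris1995classical}. So there is no ``paper's own proof'' to compare against --- your argument simply supplies what the paper quotes as a classical result, and it does so correctly, including the careful treatment of limit ordinals and the appeal to second-countability for both the rank bound and the countability of the discrete sets $I(H^\gamma(A))$.
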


%
%
%

\begin{proof}[{\bf Proof of Theorem \ref{thm:main-th1}}] 
Let  $f\colon M\to \msr$  be a continuous function  
which is intrinsically Lipschitz continuous on
$E:=M\setminus\Theta$. Denote by $L$ the Lipschitz constant of $f$. Let $x,y$ in $M$ and let $\varepsilon>0$. 

Since $\Theta$ is permeable, there exists a path $\gamma\colon [a,b]\to M$ from $x$ to $y$ with $\ell(\gamma)<\rho_M(x,y)+\varepsilon$
and such that 
$A_\gamma:=\overline{\{\gamma(t)\colon t\in [a,b]\}\cap\Theta}$ is countable. 

Invoking Lemma \ref{lem:injective-curve} we may assume that $\gamma$ is injective. Furthermore we may and will assume that $\forall t\in [a,b]\colon \ell(\gamma|_{[0,t]})=t$ and, in particular, $a=0,b=\ell(\gamma)$.

If we can show that the map $f\circ \gamma\colon [0,b]\to Y$ is $L$-Lipschitz continuous, then we are done, 
because then 
\[
d_Y\big(f(x),f(y)\big)=d_Y\big(f\circ\gamma(0),f\circ \gamma(b)\big)\le L b=L\ell(\gamma)<L(\rho_M(x,y)+\varepsilon)\,.
\]
Let $A:=\gamma^{-1}(A_\gamma)\subseteq [0,b]$. Then $A$ is closed since $\gamma$ is continuous. We start by showing that if $C_0$ is a 
connected component of $[0,b]\setminus A$ (clearly, $C_0$ is an interval with non-empty interior), then $f\circ\gamma$ is $L$-Lipschitz
on $\overline{C_0}$. Indeed, let $r,s\in C_0^\circ$ with $r<s$. 
Since $[r,s]\cap A= \emptyset$, the restricted arc $\gamma|_{[r,s]}$ is an arc in $E$ and 
therefore $\rho_E(\gamma(r),\gamma(s))\le \ell(\gamma|_{[r,s]})= (s-r)$\,. Since $f$ is intrinsically
$L$-Lipschitz continuous on $E$, 
\begin{equation}\label{eq:lip-0}
d_Y\big(f\circ\gamma(r),f\circ \gamma(s)\big)\le L\rho_E(\gamma(r),\gamma(s))\le L(s-r)\,.
\end{equation}
By continuity of $f$, Equation \eqref{eq:lip-0} also holds for $r,s\in \overline{C_0}$.

Next let $C_1$ be a connected component of $[0,b]\setminus H(A)$. If $r,s\in C_1^\circ$ with $r<s$, then, since $[r,s]$ does not contain an accumulation point of $A$,  
there exist $t_1,\ldots,t_n\in [0,b]$ such that $t_1<\ldots<t_n$ and
$[r,s]\cap A=\{t_1,\ldots,t_n\}$. Set $t_0:=r$, $t_{n+1}:=s$. Then for every $k\in\{1,\dotsc,n+1\}$ the interval 
$(t_{k-1}, t_{k})$ does not contain a point of $A$, so
\[
d_Y\big(f\circ\gamma(t_{k-1}),f\circ \gamma(t_k)\big)\le L(t_k-t_{k-1})\,,
\]
and therefore
\begin{align}
\nonumber d_\msr\Big(f\circ\gamma(r),f\circ\gamma(s)\Big)
&\le \sum_{j=1}^{n+1} d_\msr\Big(f\circ\gamma(t_{j-1}),f\circ \gamma(t_{j})\Big) \\
\label{eq:lip-metrik}&\le L \sum_{j=1}^{n+1} \big(t_j-t_{j-1}\big)= L (s-r)\,.
\end{align}
That is, 
\begin{equation}\label{eq:lip-1}
d_Y\big(f\circ\gamma(r),f\circ \gamma(s)\big)\le L(s-r)\,.
\end{equation}
and by continuity of $f$, Equation \eqref{eq:lip-1} also holds for $r,s\in \overline{C_1}$. 

We proceed by a transfinite induction argument, where the base case has already been dealt with.
The induction hypothesis is that for an ordinal $\alpha$, all ordinals $\beta<\alpha$ and
every connected component $C_\beta$ of $[0,b]\setminus H^\beta(A)$ we have that 
 $f\circ \gamma$ is $L$-Lipschitz continuous on 
$\overline{C_\beta}$. 
In order to perform the induction step, we need to show that this property extends to $\alpha$, that is,
 for every connected component
$C_\alpha$ of $[0,b]\setminus H^\alpha(A)$, $f\circ \gamma$ is $L$-Lipschitz continuous on $\overline{C_\alpha}$. 

If $\alpha$ is not a limit ordinal, we can use the same argument
as in the step from the ordinal 0 to the ordinal 1.

Now assume, that $\alpha$ is a limit ordinal. Let $r,s\in C_\alpha^\circ$, $r<s$. 
There exists an ordinal $\beta_0<\alpha$ such that $[r,s]\cap H^{\beta_0}(A)=\emptyset$ :
Otherwise, there is an increasing sequence $(\beta_n)_{n\in \N}$, $\beta_n<\alpha$ for all $n$,  
and a sequence $(t_n)_{n\in \N}$, $t_n\in H^{\beta_n}(A)\cap[r,s]$   with $\lim_{n\to\infty} t_n\in H^\alpha(A)$.
But this is impossible, since $[r,s]\subseteq C_\alpha^\circ$ and $C_\alpha^\circ\cap H^\alpha(A)=\emptyset$, so that  $\inf\{|u-v|:u\in H^\alpha(A),v\in [r,s])>0$. 

But $[r,s]\cap H^{\beta_0}(A)=\emptyset$  implies that $[r,s]\subseteq \overline{C_{\beta_0}}$ for some 
connected component $C_{\beta_0}$ of $[0,b]\setminus H^{\beta_0}(A)$.
By the induction hypothesis, $f\circ\gamma$ is $L$-Lipschitz on $\overline{C_{\beta_0}}$, and we get Equation~\eqref{eq:lip-1}.

We are ready to finish the proof. By Theorem \ref{thm:cantor-bendixson} there exists a countable
ordinal $\alpha_0$ such that $H^{\alpha_0}(A)=\emptyset$. But then $[0,b]\setminus H^{\alpha_0}(A)=[0,b]$
is the only connected component
and therefore $f\circ \gamma$ is $L$-Lipschitz continuous on $[0,b]$.
\end{proof}

\begin{remark}
Note that the inequalities in \eqref{eq:lip-metrik}
cannot be generalized in a straightforward way to a related notion of ``intrinsically Hölder continuous''.
\end{remark}

\begin{remark}\begin{enumerate} \item Theorem \ref{thm:main-th1}
generalizes Lemma 3.6 in \cite{sz2016b}. In the latter it is
assumed that the exception set is a finitely permeable submanifold of $\R^d$.

\item Theorem \ref{thm:main-th1} and its proof should also be compared with the results 
\cite[Theorem 2.5 and Proposition 2.2]{craig-et-al-2015}, which together imply the following:
Let $I$ be a real interval, $f : I\to\R$ a function and $E\subseteq I$. If
\begin{itemize}
\item  $E$ has no perfect subsets,
\item $f : I\to\R$ is continuous,
\item  the pointwise Lipschitz constant of $f$ is bounded by a constant $C$ at
every point of $I\setminus E$.
\end{itemize}
Then $f$ is $C$-Lipschitz on $I$.

This result is obviously also related to our 1-dimensional permeability criterion, Theorem \ref{thm:main-1d}, below. 

\end{enumerate}
\end{remark}

We have two immediate corollaries of Theorem \ref{thm:main-th1}:

\begin{corollary}
Let $M$ be a $C$-quasi-convex space and let $\Theta\subseteq M$ be permeable. Then every continuous function $f\colon M\to \msr$ 
into a  metric space $(\msr,d_\msr)$,
which is intrinsically $L$-Lipschitz continuous on $E=M\setminus \Theta$, is $CL$-Lipschitz continuous on the whole of $M$ (i.e., with respect to $d$).
\end{corollary}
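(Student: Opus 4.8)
The corollary follows almost immediately by combining Theorem~\ref{thm:main-th1} with the definition of quasi-convexity, so the plan is short. First I would apply Theorem~\ref{thm:main-th1} directly: since $\Theta$ is permeable relative to $M$ and $f$ is continuous and intrinsically $L$-Lipschitz on $E = M\setminus\Theta$, the theorem gives that $f$ is intrinsically $L$-Lipschitz on all of $M$, i.e.\ $d_\msr(f(x),f(y))\le L\,\rho_M(x,y)$ for all $x,y\in M$.

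The second and only remaining step is to convert the intrinsic estimate into one with respect to the original metric $d$. Here I would invoke $C$-quasi-convexity: by Definition~\ref{def:intrinsic} there is a constant $C>0$ with $\rho_M(x,y)\le C\,d(x,y)$ for all $x,y\in M$. Chaining the two inequalities yields
\[
d_\msr\big(f(x),f(y)\big)\le L\,\rho_M(x,y)\le CL\,d(x,y)\qquad(x,y\in M)\,,
\]
which is exactly the assertion that $f$ is $CL$-Lipschitz with respect to $d$.

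There is essentially no obstacle here — the content is entirely carried by Theorem~\ref{thm:main-th1}, and the quasi-convexity hypothesis is tailored precisely so that the passage from $\rho_M$ to $d$ is a one-line estimate. The only point worth a moment's care is the bookkeeping of constants: one should check that the Lipschitz constant is $CL$ and not, say, $C+L$ or $LC^2$, which the computation above confirms. If one wanted to be scrupulous about the extended-real-valued nature of $\rho_M$, one would note that the inequality $\rho_M(x,y)\le C\,d(x,y)$ forces $\rho_M(x,y)<\infty$ for all $x,y$, so no issues with the value $\infty$ arise on a quasi-convex space.
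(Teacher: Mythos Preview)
Your proposal is correct and matches the paper's approach exactly: the paper presents this as an immediate corollary of Theorem~\ref{thm:main-th1} without giving an explicit proof, and your two-step argument (apply the theorem, then use quasi-convexity to pass from $\rho_M$ to $d$) is precisely the intended reasoning.
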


\begin{corollary}\label{cor:main-th1-length}
Let $M$ be a length space and let $\Theta\subseteq M$ be permeable. Then every continuous function $f\colon M\to \msr$ 
into a  metric space $(\msr,d_\msr)$,
which is intrinsically $L$-Lipschitz continuous on $E=M\setminus \Theta$, is $L$-Lipschitz continuous on the whole of $M$ (i.e., with respect to $d$).
\end{corollary}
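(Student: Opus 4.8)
The plan is to derive this directly from Theorem \ref{thm:main-th1}, the whole content being the observation that on a length space the intrinsic metric is the original one. So the first step is simply to apply Theorem \ref{thm:main-th1} to the given data: $\Theta\subseteq M$ is permeable, $f\colon M\to\msr$ is continuous, and $f$ is intrinsically $L$-Lipschitz continuous on $E=M\setminus\Theta$. The theorem then tells us that $f$ is intrinsically $L$-Lipschitz continuous on all of $M$, which by Definition \ref{def:pw-lip} (applied with $E=M$) means precisely that
\[
d_\msr\big(f(x),f(y)\big)\le L\,\rho_M(x,y)\qquad\text{for all }x,y\in M\,.
\]

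The second step is to invoke the hypothesis that $M$ is a length space. By Definition \ref{def:intrinsic} this says exactly $\rho_M=d$, so substituting into the displayed inequality yields $d_\msr\big(f(x),f(y)\big)\le L\,d(x,y)$ for all $x,y\in M$; that is, $f$ is $L$-Lipschitz continuous with respect to $d$, which is the claim. (In particular $\rho_M$ is finite-valued here, so there is nothing to worry about with the extended co-domain of $\rho_M$.) As an alternative one may note that a length space is trivially $1$-quasi-convex, since $\rho_M(x,y)=d(x,y)\le 1\cdot d(x,y)$, and then simply quote the preceding corollary with $C=1$.

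There is essentially no obstacle: all the difficulty — the transfinite induction and the use of the Cantor--Bendixson theorem — is already packed into Theorem \ref{thm:main-th1}, and this corollary is the purely formal step of specializing the conclusion "intrinsically Lipschitz on $M$" to a setting where $\rho_M=d$. If one wanted to be maximally self-contained one could also give the two-line argument without naming length spaces explicitly, but invoking Definition \ref{def:intrinsic} is the cleanest route.
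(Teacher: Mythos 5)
Your proof is correct and is precisely the intended "immediate" argument: apply Theorem \ref{thm:main-th1} to get $d_\msr(f(x),f(y))\le L\,\rho_M(x,y)$, then use $\rho_M=d$ from the length-space hypothesis. The paper gives no separate proof (it labels both corollaries "immediate"), so there is nothing to contrast with.
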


For example, Corollary \ref{cor:main-th1-length} can be applied for $M=\R^d$ with the euclidean metric, which is
a length space. We will take a deeper look at this example in Section \ref{sec:smf}.

The next result states that, if a function is intrinsically Lipschitz except on a closed permeable set, then the
Lipschitz constant does not change when one enlarges the exception set to another permeable set.

\begin{proposition}\label{prop:equalL}
Let $\Theta_0\subseteq \Theta\subseteq M$ with $\Theta$ permeable and $\Theta_0$ closed. Let 
$f\colon M\to Y$ be intrinsically Lipschitz  except on $\Theta_0$.
Then  $f$ is intrinsically Lipschitz  except on $\Theta$ and
\[
\sup_{x,y\in M\setminus \Theta}\frac{d_Y(f(x),f(y))}{\rho_{M\setminus \Theta}(x,y)} 
=\sup_{x,y\in M\setminus \Theta_0}\frac{d_Y(f(x),f(y))}{\rho_{M\setminus \Theta_0}(x,y)}
\]
\end{proposition}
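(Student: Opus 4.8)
The plan is to deduce this from Theorem \ref{thm:main-th1} together with Proposition \ref{prop:subset-closed}, and then to argue that the two suprema coincide by bounding each one by the other. First I would set $L_0 := \sup_{x,y\in M\setminus\Theta_0} d_Y(f(x),f(y))/\rho_{M\setminus\Theta_0}(x,y)$, which is finite by hypothesis (this is precisely the intrinsic Lipschitz constant of $f$ on $M\setminus\Theta_0$), and $L_1 := \sup_{x,y\in M\setminus\Theta} d_Y(f(x),f(y))/\rho_{M\setminus\Theta}(x,y)$. Since $M\setminus\Theta\subseteq M\setminus\Theta_0$ and, for points in the smaller set, $\rho_{M\setminus\Theta}(x,y)\ge\rho_{M\setminus\Theta_0}(x,y)$ (fewer paths available can only increase the infimum), each quotient defining $L_1$ is dominated by the corresponding quotient with $\rho_{M\setminus\Theta_0}$ in the denominator, which is at most $L_0$. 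Hence $L_1\le L_0$; in particular $f$ is intrinsically $L_0$-Lipschitz on $M\setminus\Theta$, establishing the first assertion of the proposition.

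For the reverse inequality $L_0\le L_1$ I would apply Proposition \ref{prop:subset-closed} to the inclusion $\Theta_0\subseteq\Theta\subseteq M$: since $\Theta$ is permeable relative to $M$ and $\Theta_0$ is closed in $M$, the set $\Theta\setminus\Theta_0$ is permeable relative to $(M\setminus\Theta_0,d)$. Now regard $f$ restricted to $M\setminus\Theta_0$ as a continuous function on the metric space $(M\setminus\Theta_0,d)$; by construction it is intrinsically $L_1$-Lipschitz continuous on $(M\setminus\Theta_0)\setminus(\Theta\setminus\Theta_0) = M\setminus\Theta$. Theorem \ref{thm:main-th1}, applied with ambient space $M\setminus\Theta_0$ and exception set $\Theta\setminus\Theta_0$, then yields that $f$ is intrinsically $L_1$-Lipschitz continuous on all of $M\setminus\Theta_0$, i.e. $d_Y(f(x),f(y))\le L_1\,\rho_{M\setminus\Theta_0}(x,y)$ for all $x,y\in M\setminus\Theta_0$. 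Taking the supremum over such $x,y$ gives $L_0\le L_1$, and combining with the first step finishes the proof.

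The one point that requires a little care — and what I expect to be the main obstacle — is verifying that Theorem \ref{thm:main-th1} is genuinely applicable with $M\setminus\Theta_0$ in the role of the ambient metric space: one must check that $M\setminus\Theta_0$ is an honest metric space under $d$ (immediate), that $f|_{M\setminus\Theta_0}$ is continuous with respect to $d$ (inherited from continuity of $f$ on $M$), and that the intrinsic metric $\rho_{(M\setminus\Theta_0)\setminus(\Theta\setminus\Theta_0)}$ used implicitly in the statement of intrinsic $L_1$-Lipschitz continuity is exactly $\rho_{M\setminus\Theta}$, which holds since $(M\setminus\Theta_0)\setminus(\Theta\setminus\Theta_0)=M\setminus\Theta$ as sets and the defining class of paths depends only on the underlying set and $d$. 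A subtlety worth a sentence is that the suprema may a priori be over empty index sets or involve the value $+\infty$ in the denominator; in the former case both sides are (by the usual convention) bounded by any $L$ and there is nothing to prove, and in the latter the corresponding quotient is $0$ and contributes nothing to either supremum, so the monotonicity argument in the first paragraph and the application of Theorem \ref{thm:main-th1} in the second are unaffected.
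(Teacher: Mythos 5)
Your proof follows essentially the same route as the paper's: set $N=M\setminus\Theta_0$, invoke Proposition \ref{prop:subset-closed} to see that $\Theta\setminus\Theta_0$ is permeable relative to $N$, and then apply Theorem \ref{thm:main-th1} with $N$ as ambient space and $\Theta\setminus\Theta_0$ as exception set; the easy inequality $L_1\le L_0$ from the monotonicity $\rho_{M\setminus\Theta}\ge\rho_{M\setminus\Theta_0}$ is what the paper calls `$f$ is also intrinsically Lipschitz continuous on $M\setminus\Theta$'. One small inaccuracy in your write-up: you justify the continuity hypothesis of Theorem \ref{thm:main-th1} for $f|_N$ as `inherited from continuity of $f$ on $M$', but Proposition \ref{prop:equalL} does not assume $f$ continuous on $M$. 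The correct observation is that $f|_N$ is $\rho_N$-Lipschitz by hypothesis, and along any finite-length path $\gamma$ in $N$ one has $d_Y(f(\gamma(s)),f(\gamma(t)))\le L\,\ell(\gamma|_{[s,t]})\to 0$ as $|s-t|\to 0$, which is all the continuity the proof of Theorem \ref{thm:main-th1} actually uses; this is a point the paper's own proof also leaves implicit, so it is not a defect specific to your argument.
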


\begin{proof}
Let $N=M\setminus \Theta_0$. By assumption, $f$ is intrinsically
Lipschitz continuous on $N$. 
As $\Theta\supseteq \Theta_0$,  $f$ is also intrinsically Lipschitz continuous on $M\setminus\Theta=N\setminus(\Theta\setminus\Theta_0)$, and  $\Theta\setminus\Theta_0$ is 
permeable in $N$, by Proposition \ref{prop:subset-closed}. Therefore the assertion follows from 
Theorem \ref{thm:main-th1}.
\end{proof}

\begin{example}
Consider $(M,d):=(\R^2,|.|)$ and $\Theta:=\Q^2$. Then 
$\rho^{\Q^2}_{\R^2}(x,y)=|x-y|$ for all $x,y\in\R^2$.
By Corollary \ref{cor:main-th1-length}, every intrinsically Lipschitz 
function on $M$ with exception set $\Q^2$, which is continuous on $\R^2$,
is Lipschitz on the whole of $\R^2$.
\end{example}


We now
show that in the one-dimensional euclidean
case the permeable sets $\Theta$ are precisely those 
for which every function is Lipschitz iff it is continuous and intrinsically Lipschitz with 
exception set $\Theta$. Note that for subsets of $\R$ permeability is equivalent to having countable closure.

\begin{theorem}\label{thm:main-1d}
Let $\Theta\subseteq \R$. Then $\Theta$ has countable closure if and only if
for all intervals $I\subseteq \R$ and all functions $f\colon I\to \R$ 
the properties
\begin{itemize}
\item $f$ is intrinsically Lipschitz continuous with exception set $\Theta$,
\item $f$ is continuous,
\end{itemize}
imply that $f$ is Lipschitz continuous on $I$.
\end{theorem}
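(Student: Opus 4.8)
The statement is an "if and only if," so the plan is to treat the two directions separately. For the forward direction (countable closure implies the Lipschitz conclusion), note that for $\Theta\subseteq\R$ with $\overline\Theta$ countable, the set $\overline\Theta$ is in particular a countable closed subset of $\R$, hence permeable in the sense of Definition~\ref{def:permeable}: any segment $[x,y]$ realizes $\rho_\R(x,y)=|x-y|$, and $\overline{[x,y]\cap\Theta}\subseteq\overline\Theta$ is countable, so $\rho_\R^\Theta=\rho_\R$. Since $\R$ (and hence any interval $I$ with the induced metric) is a length space, Theorem~\ref{thm:main-th1} together with Corollary~\ref{cor:main-th1-length} applied to $M=I$ and the permeable set $\Theta\cap I$ immediately yields that a continuous, intrinsically $L$-Lipschitz (off $\Theta$) function $f\colon I\to\R$ is $L$-Lipschitz on $I$. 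One small point to check is that $\Theta\cap I$ is permeable \emph{relative to $I$}; this follows because $\overline{\Theta\cap I}$ (closure in $I$) is still countable, and segments inside $I$ realize the intrinsic metric of $I$.

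For the converse, I would prove the contrapositive: if $\overline\Theta$ is uncountable, I must exhibit an interval $I$ and a function $f\colon I\to\R$ that is continuous and intrinsically Lipschitz with exception set $\Theta$ but is not Lipschitz. By the Cantor--Bendixson theorem (Theorem~\ref{thm:cantor-bendixson}) applied to the closed set $\overline\Theta\subseteq\R$, uncountability forces the perfect kernel $P:=H^{\alpha_0}(\overline\Theta)$ to be nonempty; $P$ is a nonempty perfect (hence closed, and since we may intersect with a suitable compact interval, compact) subset of $\R$ with empty interior. Pick a bounded interval $I$ containing a copy of $P$ in its interior; without loss of generality $P\subseteq(0,1)=:I$. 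Now define $f\colon I\to\R$ by $f(t):=\operatorname{dist}(t,P)$. This $f$ is $1$-Lipschitz with respect to the \emph{Euclidean} metric on $I$, which is too good; I need instead a function that fails to be Lipschitz but is still intrinsically Lipschitz off $\Theta$. The right construction is the analogue of a Cantor-type "staircase": on each bounded complementary interval $(a_k,b_k)$ of $P$ in $I$, let $f$ rise and fall (e.g. a tent of height depending on $k$, with slope made as large as we like) while keeping $f\equiv 0$ on $P$. Choosing the tent on the $k$-th gap to have maximal slope $\to\infty$ destroys global Lipschitz continuity; continuity is automatic because $P$ is closed and the gap-tents shrink to $0$ in sup-norm near any point of $P$; and the crucial claim is \emph{intrinsic} Lipschitz continuity with exception set $\Theta$.

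The main obstacle is precisely that last claim, and it is where permeability fails is used. On $I\setminus\Theta$, we need $\rho_{I\setminus\Theta}(x,y)$ to be $\infty$ (or at least very large) whenever $x,y$ lie on opposite sides of a substantial chunk of $P$: any path in $I$ from $x$ to $y$ staying in $I\setminus\Theta$ must avoid $\Theta\supseteq$ (a dense-in-itself portion of) $P$, but a path in $\R$ is essentially an interval traversal, and to go from $x$ to $y$ avoiding an uncountable closed set $P\cap\overline\Theta$ lying between them is impossible — the path's image would have to contain $[x,y]$ minus at most the points it skips, yet it cannot skip an uncountable closed set while remaining continuous on a connected domain. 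Hence $\rho_{I\setminus\Theta}(x,y)=\infty$ for such pairs, so the intrinsic-Lipschitz condition on $E=I\setminus\Theta$ is vacuous across the "bad" cut and is easily arranged on each side (where $f$ is piecewise a finite tent, hence locally Lipschitz, and the component structure of $I\setminus\overline\Theta$ keeps the intrinsic distances finite only within a single complementary gap). I would need to be careful to arrange $\Theta$ itself (not merely $\overline\Theta$) to contain enough of $P$: since $P\subseteq\overline\Theta$ is perfect, $\Theta$ is dense in $P$, and one checks that density of $\Theta$ in the perfect set $P$ already suffices to make every $I\setminus\Theta$-path blocked, because a continuous path avoiding a dense subset of $P$ still cannot cross $P$. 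Assembling these observations — Cantor--Bendixson to extract $P$, the tent construction with blowing-up slopes for the counterexample function, and the connectivity/denseness argument to verify the intrinsic condition is vacuous where it needs to be — completes the converse, and with the forward direction above, the theorem.
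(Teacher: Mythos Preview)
Your forward direction is fine and matches the paper's use of Corollary~\ref{cor:main-th1-length}.

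The converse has a genuine gap. Your tent construction, with slopes $s_k\to\infty$ on the complementary intervals $(a_k,b_k)$ of $P$, is \emph{not} intrinsically Lipschitz on $E=I\setminus\Theta$. The point is that within each gap $(a_k,b_k)$ there are open sub-intervals disjoint from $\Theta$: indeed $\overline\Theta\cap(a_k,b_k)\subseteq A$ is countable (where $\overline\Theta=A\cup P$ is the Cantor--Bendixson decomposition), so $\Theta\cap(a_k,b_k)$ is certainly not all of $(a_k,b_k)$. On any such $\Theta$-free sub-interval the intrinsic metric $\rho_{I\setminus\Theta}$ coincides with Euclidean distance, while your tent has slope $s_k$ there. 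Hence the intrinsic Lipschitz constant of $f$ on $E$ is at least $\sup_k s_k=\infty$, and no uniform $L$ exists. Intrinsic Lipschitz continuity requires one global constant, not merely local Lipschitz continuity on each component; your remark that ``$f$ is piecewise a finite tent, hence locally Lipschitz'' does not suffice.

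The paper avoids this by taking $f$ to be the Cantor staircase associated to a Cantor set inside $P$: this $f$ is \emph{constant} on every complementary interval of the Cantor set, hence constant on every connected component of $I\setminus\Theta$ (any such component, being a $\Theta$-free open interval, cannot meet $\overline\Theta\supseteq P$). Thus $f$ is intrinsically $0$-Lipschitz on $I\setminus\Theta$, continuous, yet not Lipschitz. The moral is that the non-Lipschitz behaviour must live entirely on the perfect set, with the function flat on the gaps; your tents put the variation in exactly the wrong place.
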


\begin{proof}
The only if part follows from Corollary \ref{cor:main-th1-length}.

\smallskip

For the ``if part'' assume to the contrary that  $\Theta$ has uncountable 
closure. Then $\overline{\Theta}=A\cup P$, where $A$ is countable and 
$P$ is perfect by the Cantor-Bendixson theorem, \cite[Theorem 6.4]{kechris1995classical}. 
Hence, $P$ contains a homeomorphic copy of the Cantor set by \cite[Theorem 6.5]{kechris1995classical}. Let $f$ be the corresponding
Cantor staircase function. Then $f$ is continuous and $f$ is constant on
every connected component of $\R\setminus \Theta$.
But the Cantor staircase function is not 
Lipschitz.
\end{proof}

%

The following proposition by Tapio Rajala\footnote{University of Jyvaskyla, Finland} (personal communication) connects consequences of permeability and the statement of Corollary \ref{cor:main-th1-length}, that intrinsically Lipschitz continuity for a given exception sets implies the overall Lipschitz continuity. It should also be compared to Proposition \ref{prop:subset}, which states that subsets of permeable sets are permeable.

\begin{proposition}\label{prop:subset-cont}
Let $\Theta\subseteq M$ have the property that every function 
$f\colon M \to Y$ which 
is continuous and intrinsically ($L$-)Lipschitz continuous with exception set $\Theta$
is ($L$-)Lipschitz continuous. 

Then every subset $\Theta_0$ of $\Theta$ enjoys the same property.
\end{proposition}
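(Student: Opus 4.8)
The plan is to reduce the claim for $\Theta_0$ to the assumed property of $\Theta$ by extending any given function on $M\setminus\Theta_0$ to one that is intrinsically Lipschitz on the larger "good" set $M\setminus\Theta$ — or, more precisely, by arguing directly at the level of distances. Suppose $f\colon M\to Y$ is continuous and intrinsically $L$-Lipschitz continuous with exception set $\Theta_0$; we must show $f$ is $L$-Lipschitz on $M$. The difficulty is that the hypothesis on $\Theta$ applies to functions whose exception set is $\Theta$, i.e. functions that are only known to be intrinsically Lipschitz on $M\setminus\Theta$, which is a smaller set than $M\setminus\Theta_0$. So I cannot simply "forget" that $f$ is good on the extra points $\Theta\setminus\Theta_0$; rather, since $\Theta_0\subseteq\Theta$ we have $M\setminus\Theta\subseteq M\setminus\Theta_0$, and hence $f$ restricted to $M\setminus\Theta$ is automatically intrinsically $L$-Lipschitz with respect to $\rho_{M\setminus\Theta_0}$. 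The first step is therefore to check that $f$ is intrinsically $L$-Lipschitz continuous on $M\setminus\Theta$ with respect to the genuinely intrinsic metric $\rho_{M\setminus\Theta}$: this follows because for $x,y\in M\setminus\Theta$ every path in $M\setminus\Theta$ is also a path in $M\setminus\Theta_0$, so $\rho_{M\setminus\Theta_0}(x,y)\le\rho_{M\setminus\Theta}(x,y)$, and the $L$-Lipschitz estimate for $f$ with respect to $\rho_{M\setminus\Theta_0}$ transfers.

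With that in hand, $f$ is a continuous function on $M$ which is intrinsically $L$-Lipschitz continuous with exception set exactly $\Theta$. By the assumed property of $\Theta$, $f$ is $L$-Lipschitz continuous on $M$. Since $f$ was an arbitrary continuous function that is intrinsically ($L$-)Lipschitz continuous with exception set $\Theta_0$, this establishes the same property for $\Theta_0$.

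The only genuinely substantive point — and the one I'd state carefully — is the comparison of intrinsic metrics $\rho_{M\setminus\Theta_0}\le\rho_{M\setminus\Theta}$ when $\Theta_0\subseteq\Theta$, i.e. that enlarging the excluded set can only increase the intrinsic distance (or leave it infinite). This is immediate from the definition of $\rho_E$ as an infimum over paths lying in $E$, since $\Gamma(x,y)$ for $E=M\setminus\Theta$ is a subset of $\Gamma(x,y)$ for $E=M\setminus\Theta_0$. Thus the "$L$-version" and the "some $L$" version both go through verbatim, and no appeal to permeability, Theorem~\ref{thm:main-th1}, or the Cantor--Bendixson machinery is needed; the statement is purely a monotonicity-of-hypotheses observation. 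I would expect no real obstacle beyond making sure the Lipschitz constant is tracked correctly through the metric comparison, which is what makes the parenthetical "($L$-)" assertion valid.
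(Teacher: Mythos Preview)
Your argument is correct and follows exactly the same route as the paper: use the inclusion $M\setminus\Theta\subseteq M\setminus\Theta_0$ to get $\rho_{M\setminus\Theta_0}\le\rho_{M\setminus\Theta}$, deduce that $f$ is intrinsically $L$-Lipschitz with exception set $\Theta$, and then invoke the assumed property of $\Theta$. The paper's proof is a one-line version of what you wrote; in particular, your remark that no permeability or Cantor--Bendixson machinery is needed is exactly the point.
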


\begin{proof}
If a function $f\colon M\to Y$ is ($L$-)Lipschitz continuous with respect to $\rho_{M\setminus\Theta_0}$, then, as $\rho_{M\setminus\Theta_0}\leq \rho_{M\setminus\Theta}$, $f$ is ($L$-)Lipschitz continuous with respect to $\rho_{M\setminus\Theta}$ implying ($L$-)Lipschitz continuity of $f$ on the whole of $M$.
\end{proof}

\begin{remark} \label{rem:question-Rd} With regard to  Corollary \ref{cor:main-th1-length}, Theorem \ref{thm:main-1d} and Proposition \ref{prop:subset}, one may ask the following interesting question:

\medskip

\textit{In $\R^d$, are the permeable subsets $\Theta$ precisely those for which every function is $L$-Lipschitz iff it is continuous and intrinsically $L$-Lipschitz with 
exception set $\Theta$?}

\medskip

The following example, proposed by Tapio Rajala (personal communication), shows that one may not reduce `$L$-Lipschitz' to merely `Lipschitz' in the above question:
Consider the set $\Theta=([0,1]\setminus\mathbb{Q})^2\subseteq M=[0,1]^2$. Then the intrinsic metric $\rho_{M\setminus\Theta}$ is given by the $1$-distance, see \cite[Proposition 3.6]{rajala2019}. Hence, if a function is continuous and $L$-Lipschitz with respect to $\rho_{M\setminus\Theta}$, then it will be continuous and $\sqrt{2}L$-Lipschitz with respect to the euclidean metric on $[0,1]^2$, and therefore Lipschitz. On the other hand, $\Theta$ is not permeable, which we show in the subsequent proposition. \end{remark}

\begin{proposition}
Let $M=[0,1]^2$ endowed with the euclidean distance and let 
$\Theta=([0,1]\setminus\mathbb{Q})^2$.
Then $\Theta$ is not permeable.
\end{proposition}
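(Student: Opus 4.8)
The plan is to show that $\Theta = ([0,1]\setminus\mathbb{Q})^2$ fails permeability by exhibiting a pair of points $x,y \in M$ for which \emph{every} path of near-geodesic length meets $\Theta$ in an uncountable set. The natural candidates are two points on the boundary of the square, say $x = (0,0)$ and $y = (1,1)$, whose Euclidean distance is $\sqrt 2$ and for which the only length-$\sqrt 2$ path is the diagonal segment. More generally I would work with the diagonal and a small tolerance: if $\gamma$ is any path from $(0,0)$ to $(1,1)$ with $\ell(\gamma) < \sqrt 2 + \varepsilon$ for $\varepsilon$ small, then $\gamma$ must stay close to the diagonal $\{(t,t): t\in[0,1]\}$, and in particular (after passing to an arc via Lemma \ref{lem:injective-curve}) its image projects onto a set containing $[0,1]$ in the first coordinate. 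The key point is then that on a large (uncountable, indeed positive-measure) set of parameter values the path lies at a point $(u,v)$ with both $u$ and $v$ irrational, so that $\overline{\gamma([a,b])\cap\Theta}$ is uncountable.

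The key steps, in order, are: (1) Fix $x=(0,0)$, $y=(1,1)$ and let $\gamma\colon[0,1]\to M$ be an arc from $x$ to $y$ with $\ell(\gamma) < \sqrt2 + \varepsilon$; reparametrise by arc length. (2) Use a "close to the diagonal" estimate: writing $\gamma(t) = (\gamma_1(t),\gamma_2(t))$, the length bound forces $\int_0^1 |\gamma_1' - \gamma_2'|$ (in the rectifiable sense) to be small, hence $\sup_t |\gamma_1(t) - \gamma_2(t)|$ is small; more elementarily, if the path ever reached a point with $|\gamma_1 - \gamma_2| \ge \delta$ its length would exceed $\sqrt{2}\sqrt{1 + c\delta^2} > \sqrt 2 + \varepsilon$ for suitable constants. (3) Since $\gamma_1$ is continuous with $\gamma_1(0)=0$, $\gamma_1(1)=1$, the image of $\gamma_1$ is all of $[0,1]$; restrict attention to the (uncountable) set $S$ of $t$ with $\gamma_1(t) \in [0,1]\setminus\mathbb{Q}$ and, using step (2), arrange that for a further uncountable subset $S'\subseteq S$ one also has $\gamma_2(t)$ irrational. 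The cleanest way to get $S'$ uncountable is a measure/cardinality argument: the set of $t$ for which $\gamma_2(t)\in\mathbb{Q}$ is a countable union of level sets, each of which — because $\gamma_2$ is of bounded variation and $\gamma_1$ is "almost equal" to $\gamma_2$ — cannot swallow all but countably many points of $S$. (4) Conclude that $\gamma([a,b])\cap\Theta \supseteq \{\gamma(t): t\in S'\}$ is uncountable, so $\gamma\notin\Gamma^\Theta(x,y)$; since $\varepsilon$ was arbitrary, $\rho_M^\Theta(x,y) \ge \sqrt2 + \varepsilon$ for all $\varepsilon$, whence $\rho_M^\Theta(x,y) = \infty > \sqrt2 = \rho_M(x,y)$, contradicting permeability.

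I expect the main obstacle to be step (3): making rigorous that for a near-diagonal path the parameter set on which \emph{both} coordinates are irrational is uncountable. One has to rule out the possibility that $\gamma_2$ is rational-valued on a "co-countable" subset of the set where $\gamma_1$ is irrational. Here I would exploit that $\gamma$ has finite length, so both $\gamma_1,\gamma_2$ have bounded variation; combined with $|\gamma_1(t)-\gamma_2(t)|<\delta$ this should force, for each fixed rational $q$, the level set $\{t: \gamma_2(t)=q\}$ to project under $\gamma_1$ into the tiny interval $(q-\delta,q+\delta)$, and a packing argument over the countably many rationals then leaves uncountably many $t$ with $\gamma_1(t)$ irrational and $\gamma_2(t)$ irrational. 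An alternative, possibly cleaner route is to invoke \cite[Proposition 3.6]{rajala2019} (cited in Remark \ref{rem:question-Rd}), which identifies $\rho_{M\setminus\Theta}$ with the $\ell^1$-metric: since $\rho_{M\setminus\Theta}((0,0),(1,1)) = 2 \ne \sqrt 2 = \rho_M((0,0),(1,1))$, the set $\Theta$ is not even metrically removable, and one can try to upgrade this to non-permeability by noting that any path avoiding $\Theta$ up to a countable set can be perturbed to one avoiding $\Theta$ entirely without increasing length — though justifying that perturbation is essentially the same difficulty repackaged, so I would likely present the direct diagonal argument.
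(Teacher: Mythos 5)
Your global strategy is the right one (show that every path from $(0,0)$ to $(1,1)$ of length close to $\sqrt 2$ must meet $\Theta$ uncountably often), but the decisive step (3) is genuinely gappy, as you already suspect. The proposed "packing argument" does not close: for each rational $q$ you only learn that $\gamma_1$ maps the level set $\{t:\gamma_2(t)=q\}$ into $(q-\delta,q+\delta)$, and since $\bigcup_{q\in\mathbb Q\cap[0,1]}(q-\delta,q+\delta)=[0,1]$, summing (or unioning) over $q$ yields no bound at all. Nothing in bounded variation plus the near-diagonal estimate rules out, a priori, that a single level set $\{\gamma_2=q\}$ is uncountable (it is a closed set and could be Cantor-like), nor that the countable union $\{\gamma_1\in\mathbb Q\}\cup\{\gamma_2\in\mathbb Q\}$ is co-countable; the near-diagonal bound is a sup bound, not a variation bound, and gives no control on how often either coordinate revisits a given value. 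So the contradiction is not reached.

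The paper's proof supplies exactly the missing idea and takes a different route. It does not use a near-diagonal estimate at all. Instead it first replaces $\gamma$ by a \emph{monotone graph}: parts where the first coordinate retreats are rerouted along a vertical line with rational abscissa, and parts where the second coordinate decreases along a horizontal line with rational ordinate (both of which miss $\Theta$ entirely and only shorten the path). For the resulting monotone $\tilde\gamma\colon[0,1]\to[0,1]$, the derivative exists a.e.; if it vanished a.e. the arc length would be $2$ (a known fact about singular monotone functions, cf.\ \cite{delegl14}), contradicting $\ell<\sqrt2+\varepsilon$, so there is a set $A$ of positive Lebesgue measure on which $\tilde\gamma'>0$. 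On $A$, $\tilde\gamma$ is strictly increasing, hence injective. Now both coordinate projections are countable-to-one on the graph over $A$: $\{x\in A: x\in\mathbb Q\}$ is countable, and, by injectivity, $\{x\in A:\tilde\gamma(x)\in\mathbb Q\}$ is also countable. Since $A$ is uncountable, the graph over $A$ meets $\Theta$ uncountably often. It is precisely this injectivity-on-a-positive-measure-set device that replaces your packing argument; without something playing that role (injectivity, Baire category applied with sufficient care, or some other structure) the "both coordinates irrational on an uncountable set" claim remains unproved.
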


\begin{proof}
Assume that $\gamma$ is a curve from $(0,0)$ to $(1,1)$ of length $\ell(\gamma)<\sqrt{2}+\varepsilon$. As we can always shorten parts of $\gamma$, where its first component is not monotonically increasing, by a vertical line with rational first component, it suffices to assume $\gamma$ to be given by a function $\tilde \gamma\colon[0,1]\to[0,1]$ of arc length smaller than $\sqrt{2}+\varepsilon$. Similarly, one may replace intervals, where $\tilde\gamma$ is strictly decreasing, by a horizontal segment with rational function value. So we may assume $\tilde\gamma$ to be monotonically increasing. As monotonically increasing function, its derivative exists almost everywhere in $[0,1]$. The derivative can not be zero almost everywhere, for then the function's arc length were $2$ which is not smaller than $\sqrt{2}+\varepsilon$ for arbitrary $\varepsilon$, see e.g. \cite[Theorem 4]{delegl14}. Thus there must be a set $A$ with $\lambda(A)>0$, where $\tilde{\gamma}'(x)>0$ for all $x\in A$. Since the derivatives are positive on $A$, it follows that $\tilde{\gamma}(x)<\tilde{\gamma}(x')$ for $x<x'\in A$. Since $M\setminus\Theta = \left(([0,1]\cap\mathbb{Q})\times([0,1]\setminus\mathbb{Q})\right)\cup\left(([0,1]\setminus\mathbb{Q})\times([0,1]\cap\mathbb{Q})\right)$, \begin{align}\label{eq:unionmt}
\mathrm{Graph}(\tilde{\gamma}\!\!\mid_A)\cap (M\setminus\Theta)=\bigcup_{x\in A\cap\mathbb{Q}}\left\{(x,\tilde{\gamma}(x))\right\}\cup\bigcup_{y\in \mathbb{Q}\cap\tilde\gamma(A)}\left\{\big((\tilde{\gamma}\!\!\mid_A)^{-1}(y),y\big)\right\}.
\end{align}
As $A$ is uncountable, $\mathrm{Graph}(\tilde{\gamma}\!\!\mid_A)$ has uncountably many values in the first component, but $\mathrm{Graph}(\tilde{\gamma}\!\!\mid_A)$ intersects the first union of \eqref{eq:unionmt} in countably many points. In the same way, the second union intersects $\mathrm{Graph}(\tilde{\gamma}\!\!\mid_A)$ only in countably many points. Therefore $\mathrm{Graph}(\tilde{\gamma}\!\!\mid_A)\subseteq \mathrm{Graph}(\tilde{\gamma})$ intersects $\Theta$ in uncountably many points. We conclude that $\Theta$ is not permeable. 
\end{proof}

We have shown in Proposition \ref{prop:empty-interior} that permeable sets cannot contain 
interior points.
It therefore makes sense to study subsets $\Theta \subseteq \R^d$
that have no interior points relative to $\R^d$, and
in the next section we further specialize to submanifolds of $\R^d$
with dimension strictly smaller than $d$.

\section{Sub-manifolds of $\R^d$ as permeable sets}\label{sec:smf}

\begin{definition}\label{defi:submanifold}
Let $d,m,k\in \N\cup\{0\}$, $m < d$,  and let $\Theta\subseteq \R^d$.
We say $\Theta$ is an $m$-dimensional $C^k$-{\em submanifold} of $\R^d$ iff 
for every $\xi\in \Theta$ there exist open sets $U,V\subseteq \R^d$ 
and a $C^k$-diffeomorphism $\Psi:V\to U$ such that $\xi\in U$ and
for all $y=(y_1,\dots,y_d)\in V$ it holds
$\Psi(y)\in \Theta \Longleftrightarrow y_{m+1}=\dots=y_d=0$. 
In the case where $k=0$, by a $C^0$-diffeomorphism we mean a homeomorphism,
and we also call $\Theta$  a {\em topological submanifold (\textsc{top}-submanifold) }. 
\end{definition}

\begin{definition}
A \textsc{top}-submanifold of $\R^d$ of dimension $m<d$ is called {\em Lipschitz} or of {\em class \textsc{lip}} if the 
mappings $\Psi, \Psi^{-1}$ from Definition \ref{defi:submanifold} are Lipschitz continuous  on every compact
subset of their respective domain.
\end{definition}

\begin{corollary}
The class of Lipschitz submanifolds contains those that possess
continuously differentiable mappings $\Psi,\Psi^{-1}$ (class $C^1$) as well
as those having mappings $\Psi,\Psi^{-1}$ that are continuous and piecewise
linear, resp.~piecewise differentiable, on subdivisions of $U,V$ into polyhedra (class \textsc{pl}, resp.~class \textsc{pdiff}).
\end{corollary}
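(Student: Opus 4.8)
The plan is to reduce the statement to a single local estimate for the chart maps and then exploit that the charts in Definition \ref{defi:submanifold} may be shrunk freely: a restriction of a chart is again a chart witnessing the submanifold structure at $\xi$. A $C^1$ map is trivially $C^1$ on each polyhedron of any locally finite triangulation of its domain, and an affine map is $C^1$, so all three classes ($C^1$, \textsc{pl}, \textsc{pdiff}) are covered once one handles maps that are continuous and $C^1$ on the polyhedra of a locally finite polyhedral subdivision. Throughout, $\Theta$ is a \textsc{top}-submanifold such that around each $\xi\in\Theta$ there is a chart $\Psi\colon V\to U$ as in Definition \ref{defi:submanifold} for which both $\Psi$ and $\Psi^{-1}$ are continuous and piecewise $C^1$ in this sense (for $\Psi^{-1}$ this is part of the hypothesis); write $\xi'=\Psi^{-1}(\xi)$.

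\textbf{Local estimate.} First I would prove: if $\Phi\colon W\to\R^d$ is continuous and $C^1$ on each polyhedron of a locally finite subdivision of the open set $W\subseteq\R^d$, then $\Phi$ is Lipschitz on every open ball $B$ with $\overline B\subseteq W$. By local finiteness only finitely many polyhedra $P_1,\dots,P_N$ of the subdivision meet the compact set $\overline B$; each $P_i\cap\overline B$ is compact and convex, so $L_i:=\sup\{\|D(\Phi|_{P_i})(x)\|:x\in P_i\cap\overline B\}<\infty$, and by the mean value inequality along segments inside $P_i\cap\overline B$ the map $\Phi|_{P_i\cap\overline B}$ is $L_i$-Lipschitz. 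Put $L:=\max_i L_i$. For $a,b\in B$ the segment $[a,b]$ lies in $B$ by convexity and meets only finitely many of the (convex) sets $P_i$, so $t\mapsto\Phi(a+t(b-a))$ is continuous on $[0,1]$ and Lipschitz with constant $\le L$ on each of finitely many subintervals; by Lemma \ref{lem:pwlip+cont=lip} (whose elementary proof yields the uniform constant $L$) it is $L$-Lipschitz, whence $\|\Phi(a)-\Phi(b)\|\le L\|a-b\|$. For a genuinely $C^1$ map one skips the subdivision and uses the mean value inequality on $B$ directly; for a \textsc{pl} map the $D(\Phi|_{P_i})$ are the constant linear parts.

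\textbf{Shrinking the chart.} Applying the estimate to $\Phi=\Psi$ yields an open ball $B\ni\xi'$ with $\overline B\subseteq V$ on which $\Psi$ is Lipschitz, and applying it to $\Phi=\Psi^{-1}$ yields an open ball $B'\ni\xi$ with $\overline{B'}\subseteq U$ on which $\Psi^{-1}$ is Lipschitz. Set $U_\xi:=\Psi(B)\cap B'$ and $V_\xi:=\Psi^{-1}(U_\xi)=B\cap\Psi^{-1}(B')$; these are open, $\xi\in U_\xi$, $\xi'\in V_\xi$, and $\Psi|_{V_\xi}\colon V_\xi\to U_\xi$ is a homeomorphism still satisfying the slicing condition of Definition \ref{defi:submanifold}, being a restriction of $\Psi$. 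Since $V_\xi\subseteq B$, the map $\Psi|_{V_\xi}$ is Lipschitz; since $U_\xi\subseteq B'$, its inverse $\Psi^{-1}|_{U_\xi}$ is Lipschitz; in particular both are Lipschitz on every compact subset of their domains. As $\xi\in\Theta$ was arbitrary, $\Theta$ is of class \textsc{lip}.

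\textbf{Main obstacle.} The one genuine subtlety is that being $C^1$ (or affine) on each polyhedral piece, or merely locally Lipschitz, does not by itself produce a Lipschitz bound on a whole neighbourhood: on a non-convex domain a $C^1$-diffeomorphism may fail to be Lipschitz on compact subsets of its image (picture a region spiralling back close to itself). This is resolved by (i) restricting to balls, whose convexity reduces matters to a one-variable statement along segments, (ii) using Lemma \ref{lem:pwlip+cont=lip} to glue the finitely many piecewise constants along each such segment, and (iii) the cost-free passage to smaller charts in Definition \ref{defi:submanifold}. Verifying that the given polyhedral subdivisions are locally finite, so that $\overline B$ meets only finitely many pieces, is routine for polyhedral complexes in $\R^d$.
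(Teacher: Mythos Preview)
The paper states this corollary without proof, treating it as an immediate consequence of the definitions. Your argument is correct and fills in the details carefully, but it does more work than required.

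Your local estimate is sound and is indeed the heart of the matter: a continuous map that is $C^1$ on each piece of a locally finite polyhedral subdivision is locally Lipschitz (Lipschitz on every ball whose closure lies in the domain). However, the chart--shrinking step and the ``main obstacle'' you describe are unnecessary. The claim that a $C^1$-diffeomorphism on a non-convex domain ``may fail to be Lipschitz on compact subsets of its image'' is false: any locally Lipschitz map $\Phi$ on an open set $W\subseteq\R^d$ is automatically Lipschitz on every compact $K\subseteq W$. Indeed, if $\|\Phi(x_n)-\Phi(y_n)\|/\|x_n-y_n\|\to\infty$ with $x_n,y_n\in K$, pass to convergent subsequences $x_n\to x$, $y_n\to y$; if $x\neq y$ continuity of $\Phi$ gives a bounded quotient, while if $x=y$ then eventually $x_n,y_n$ lie in a single ball around $x$ where $\Phi$ is Lipschitz --- contradiction either way. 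Your spiralling picture shows only that the Lipschitz constant is not uniform over \emph{all} compacta, which is not required by the definition. Thus the original $C^1$/\textsc{pl}/\textsc{pdiff} charts already satisfy ``Lipschitz on every compact subset of their domain'' without any shrinking, and the corollary follows directly from your local estimate plus this compactness remark.
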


Before we state the main result of this section, we prove a preparatory lemma.

\begin{lemma}\label{lemma:polygonal-chain}
Let $\Theta\subseteq\R^d$ be a Lebesgue-nullset.
Then for all $x,y\in \R^d$ and all $\varepsilon>0$ there exists a 
polygonal chain $\gamma\colon [a,b]\to \R^d$ such that 
$\ell(\gamma)<\|y-x\|+\varepsilon$ and 
$\{t\in [a,b]\colon \gamma(t)\in \Theta\}$ is a Lebesgue-nullset.
\end{lemma}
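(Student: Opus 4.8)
The plan is to reduce the problem to a one-parameter argument and use a measure-theoretic averaging (Fubini) argument to find a good direction in which the segment avoids $\Theta$ except on a nullset, then chain finitely many such segments together. First I would handle the case of a single segment from $x$ to $y$: pick a small $(d-1)$-dimensional "window" $W$ transverse to $y-x$ around $x$ of diameter $\delta$, and for each $w\in W$ consider the segment $\sigma_w$ from $w$ to $w+(y-x)$. The union of these segments, as $w$ ranges over $W$, is (up to the affine reparametrisation) a bi-Lipschitz image of $W\times[0,1]$, hence by Fubini's theorem, since $\Theta$ is a Lebesgue-nullset in $\R^d$, for almost every $w\in W$ the set $\{t:\sigma_w(t)\in\Theta\}$ is a $1$-dimensional nullset. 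Choosing one such $w$ close to $x$ and one such endpoint close to $y$ gives a polygonal chain $x\to w\to w+(y-x)\to\ldots$; actually it is cleaner to connect $x$ to $w$, then $w$ to $w'$ where $w'$ is a good endpoint near $y$, then $w'$ to $y$, but one must also ensure the three short connecting segments can be chosen to meet $\Theta$ in a nullset.

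To make the chaining clean I would instead argue as follows. Fix $x,y$ and $\varepsilon>0$. Choose a hyperplane $H$ through $x$ not parallel to $y-x$ (if $x=y$ there is nothing to do), and a small ball $B\subseteq H$ centered at $x$ of radius $\delta$, with $\delta$ small enough that for every $w\in B$ the segment $[w,y]$ has length $<\|y-x\|+\varepsilon$ (possible by continuity, since $[x,y]$ has length $\|y-x\|$). The map $(w,t)\mapsto (1-t)w+ty$ from $B\times[0,1)$ to $\R^d$ is locally bi-Lipschitz onto an open neighbourhood-like set (its Jacobian is nonzero because $y\notin H$), so it maps the $d$-dimensional nullset preimage of $\Theta$ to a nullset and conversely pulls $\Theta$ back to a nullset; by Fubini, for a.e.\ $w\in B$ the slice $\{t\in[0,1): (1-t)w+ty\in\Theta\}$ is a $1$-dimensional nullset. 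Pick such a $w$. Then the two-segment chain $x\to w\to y$ has length $\delta+|[w,y]|$, which we can make $<\|y-x\|+\varepsilon$ by shrinking $\delta$ first; the segment $[w,y]$ meets $\Theta$ in a nullset of parameters by construction, and we separately need the tiny segment $[x,w]\subseteq H$ to meet $\Theta$ in a nullset — for this, run the same Fubini argument once more, choosing $w$ not only outside the bad set for the $[w,y]$ family but also so that $[x,w]$ is good (a line through $x$ hits $\Theta$ in a $1$-nullset for a.e.\ direction).

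The main obstacle is bookkeeping the two (or three) "good direction" conditions simultaneously: we need a single $w$ that is good for the long segment $[w,y]$, good for the short connector $[x,w]$, and close to $x$. I would resolve this by intersecting the relevant full-measure sets: the set of $w\in B$ for which the $[w,y]$-slice is a nullset is of full measure in $B$ by Fubini; independently, parametrising the short connectors as segments from $x$ in directions $u$, the set of directions $u$ for which $[x, x+\delta u]$ meets $\Theta$ in a $1$-nullset is of full measure on the unit sphere of $H$ (again Fubini, since $\Theta$ is a $d$-dimensional nullset); the intersection of these two full-measure conditions is nonempty and we may also require $w\in B$, giving the desired $w$. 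Concatenating $[x,w]$ and $[w,y]$ and reparametrising by arc length yields the polygonal chain $\gamma\colon[a,b]\to\R^d$ with $\ell(\gamma)<\|y-x\|+\varepsilon$ and $\{t:\gamma(t)\in\Theta\}$ a Lebesgue-nullset, since it is a finite union of images of nullsets under bi-Lipschitz (affine) reparametrisations. I expect no difficulty beyond this; the Fubini applications are routine once the bi-Lipschitz coordinate changes are set up, and the finiteness of the chain ($2$ segments) makes the union-of-nullsets step trivial.
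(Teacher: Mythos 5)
Your first two sentences (reduce to a single segment; find a good parallel translate by Fubini) are on the right track, and the $[w,y]$ family you set up is fine: since $y\notin H$, the map $(w,t)\mapsto(1-t)w+ty$ on $B\times[0,1)$ sweeps a genuinely $d$-dimensional cone, and Fubini does give you that for a.e.\ $w\in B$ the slice $\{t:(1-t)w+ty\in\Theta\}$ is a $1$-nullset. The gap is in your treatment of the short connector $[x,w]$. Because you chose $H$ to pass through $x$ and $B\subseteq H$ centered at $x$, every candidate connector $[x,w]$ lies entirely inside $H$. The family of these connectors, as $w$ ranges over $B$ (equivalently, as the direction $u$ ranges over the unit sphere of $H$), only sweeps out the $(d-1)$-dimensional disc $B\subseteq H$ itself — a Lebesgue-nullset in $\R^d$. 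So the hypothesis "$\Theta$ is a $d$-dimensional nullset" gives you no control over $\Theta\cap H$: take $\Theta\supseteq H$, which is a perfectly legal Lebesgue-nullset, and then \emph{every} connector $[x,w]\subseteq H$ meets $\Theta$ on a full-measure parameter set, so the "full-measure set of good directions on the unit sphere of $H$" you invoke is in fact empty. The Fubini/polar-coordinate argument you have in mind does give a.e.\ direction on the unit sphere of $\R^d$ — but those directions leave $H$, so the resulting $w$ is not in your ball $B$, and your two full-measure sets live on different spaces and cannot simply be intersected.

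The paper sidesteps this by placing the $(d-1)$-dimensional transverse ball $B$ at the \emph{midpoint} $\tfrac12(x+y)$, orthogonal to $x-y$, with radius tuned so that $\|x-z\|+\|z-y\|<\|x-y\|+\varepsilon$ for $z\in B$. Then \emph{both} segment families $[x,z]$ and $[z,y]$, $z\in B$, sweep full-dimensional cones (the double cone $C=\conv(B\cup\{x,y\})$), and a single Fubini integration over $C$ produces, for a.e.\ $z\in B$, a single $z$ for which both slices are $1$-nullsets — no separate connectors, no intersection of incompatible full-measure conditions. Your construction can also be repaired: either push $H$ a small distance $\delta$ toward $y$ so that $x\notin H$ and the segments $[x,w]$, $w\in B\subseteq H$, sweep a $d$-dimensional cone, or replace the $(d-1)$-ball $B\subseteq H$ by a full $d$-dimensional ball $B_\delta(x)$, in which case both families $[x,w]$ and $[w,y]$ admit Fubini (the Jacobian factors $t^d$, resp.\ $(1-t)^d$, are harmless since $\Theta$ is a nullset at every scale) and you can intersect two full-measure subsets of $B_\delta(x)$. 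As written, though, the proof does not go through.
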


\begin{proof}
For the case $d=1$ there is nothing to prove. 
If $d>1$, consider the $(d-1)$-dimensional ball $B$ with center 
$\frac{1}{2}(x+y)$
and radius $\frac{1}{2}\sqrt{(\|y-x\|+\frac{\varepsilon}{2})^2-\|y-x\|^2}$
that lies in the hyperplane orthogonal to $x-y$ and passes through 
$\frac{1}{2}(x+y)$. Then the convex hull of $B\cup\{x,y\}$, which we denote by $C$, is a $d$-dimensional
double cone and, since $\Theta$ is a Lebesgue-nullset,
we have $\lambda^d(C\cap \Theta)=0$, where $\lambda^d$ is the Lebesgue-measure on $\R^d$.

By Fubini's theorem,
\begin{align*}
0&=\lambda^d(C\cap \Theta)\\
&=\frac{1}{2}\|x-y\|\int_B \int_{[0,1]} \Big(1_\Theta((1-t)x+t z)+1_\Theta((1-t)y+t z)\Big) (1-t)^{d-1} dt\, dz\,.
\end{align*}
From this we conclude that 
\[
\int_0^1 \Big(1_\Theta((1-t)x+t z)+1_\Theta((1-t)y+t z)\Big) (1-t)^{d-1} dt
=0\,,
\]
for almost all $z\in B$. We may
choose one such $z\in B$, and this we have $1_\Theta((1-t)x+t
z)+1_\Theta((1-t)y+t z)=0$ for almost all $t\in(0,1)$. Thus the proof is finished. 
\end{proof}

\begin{theorem}\label{th:lip}
Let $\Theta\subseteq\R^d$ be a \textsc{lip}-submanifold  which in addition 
is  a closed subset of $\R^d$.
Then for all $x,y\in \R^d$ and all $\varepsilon>0$ there exists a path 
$\gamma\colon[a,b]\to\R^d$ from $x$ to $y$ with
$\ell(\gamma)<\|x-y\|+\varepsilon$ and such that 
$\gamma\big([a,b]\big)\cap\Theta$ is finite. Therefore, $\Theta$ is finitely permeable and hence permeable.
\end{theorem}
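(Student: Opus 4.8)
The plan is to reduce the statement to the Lebesgue-nullset situation already handled by \autoref{lemma:polygonal-chain}, then to upgrade the conclusion from ``the bad parameter set has measure zero'' to ``the path meets $\Theta$ in only finitely many points'' by exploiting the local bi-Lipschitz structure of $\Theta$ together with closedness. First I would note that a \textsc{lip}-submanifold of dimension $m<d$ is locally bi-Lipschitz to an $m$-dimensional coordinate plane, hence has $\sigma$-finite (indeed locally finite) $m$-dimensional Hausdorff measure, and in particular is a Lebesgue-nullset in $\R^d$. So \autoref{lemma:polygonal-chain} applies and produces, for given $x,y$ and $\varepsilon$, a polygonal chain $\gamma_0\colon[a,b]\to\R^d$ with $\ell(\gamma_0)<\|x-y\|+\varepsilon/2$ and such that $N:=\{t\in[a,b]\colon\gamma_0(t)\in\Theta\}$ is a Lebesgue-nullset. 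This is not yet enough: $N$ could be uncountable (a Cantor-type set), so $\gamma_0$ need not be a competitor for the finite intrinsic metric.

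The second and main step is to modify $\gamma_0$ on a neighborhood of $N$ so that only finitely many intersections with $\Theta$ survive. Because $\Theta$ is closed and a polygonal chain is compact, I would cover $\gamma_0([a,b])\cap\Theta$ by finitely many of the chart neighborhoods $U$ from \autoref{defi:submanifold}; on each such $U$, the diffeomorphism $\Psi^{-1}$ is bi-Lipschitz on compact subsets and flattens $\Theta\cap U$ to a piece of the plane $\{y_{m+1}=\dots=y_d=0\}$. Inside a single chart, transport the relevant sub-arc of $\gamma_0$ via $\Psi^{-1}$ to a path in $V\subseteq\R^d$; its intersection with the flat model $\{y_{m+1}=\dots=y_d=0\}$ still corresponds to a Lebesgue-nullset of parameters (bi-Lipschitz maps preserve nullsets), and now we can perturb the transported path by an arbitrarily small amount in the $y_d$-direction — since $m<d$ there is at least one ``free'' normal coordinate — to make it cross the flat plane transversally and only finitely often (e.g. replace it by a polygonal chain whose vertices avoid the plane and whose $y_d$-coordinate changes sign only finitely many times). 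Pushing the perturbed path back through $\Psi$ gives a path that is $C$-bi-Lipschitz-close to the original sub-arc, hence changes its length by at most a controlled amount, and meets $\Theta$ in finitely many points within that chart. Doing this chart by chart (finitely many charts), and using the closedness of $\Theta$ to keep the perturbations small enough that no new intersections are created between charts, yields a path $\gamma$ from $x$ to $y$ with $\gamma([a,b])\cap\Theta$ finite; choosing all perturbations small enough also gives $\ell(\gamma)<\|x-y\|+\varepsilon$.

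The hard part will be the bookkeeping in this chart-by-chart perturbation: one must choose the perturbation scales in the right order (first fix the finite chart cover and a positive distance $\delta$ from the part of $\gamma_0$ that stays away from $\Theta$ to $\Theta$ itself, then perturb only inside the charts by less than $\delta$) so that the local modifications do not interact, the total length increase stays below $\varepsilon/2$, and no spurious intersections with $\Theta$ are introduced outside the charts. A secondary technical point is making precise the claim that a path in $V$ meeting a coordinate plane in a nullset of parameters can be replaced, with arbitrarily small length cost, by one meeting it finitely often — this is where the strict inequality $m<d$ is essential, and it is cleanest to argue it for polygonal chains, perturbing vertices to avoid the plane and reducing the number of sign changes of the normal coordinate. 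Once these perturbations are in place, the final sentence is immediate: the path exhibits $\rho^{\Theta,\textsc{fin}}_{\R^d}(x,y)\le\|x-y\|+\varepsilon=\rho_{\R^d}(x,y)+\varepsilon$, and since $\varepsilon>0$ was arbitrary and trivially $\rho^{\Theta,\textsc{fin}}_{\R^d}\ge\rho_{\R^d}$, we get $\rho^{\Theta,\textsc{fin}}_{\R^d}=\rho_{\R^d}$, i.e.\ $\Theta$ is finitely permeable, hence permeable.
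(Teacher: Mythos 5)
You have the broad outline right, and it matches the paper's strategy at that level: reduce via \autoref{lemma:polygonal-chain} to a line or polygonal chain whose bad parameter set is a Lebesgue nullset (using that a \textsc{lip}-submanifold of dimension $m<d$ is a nullset in $\R^d$), cover the compact piece $\gamma_0([a,b])\cap\Theta$ by finitely many bi-Lipschitz charts, and exploit the spare normal coordinate $e_d$ to lift the path off the flattened model plane $\{y_{m+1}=\dots=y_d=0\}$. (The paper additionally first reduces to the case $x,y\in\Theta$ using closedness of $\Theta$, but that is a minor simplification.) However, your in-chart step has a genuine gap. Transporting the \emph{whole} relevant sub-arc of $\gamma_0$ into $V$ via $\Psi^{-1}$, replacing it by a polygonal chain (or any $C^0$-small perturbation), and pushing back through $\Psi$ only yields a length estimate of the form $\ell(\Psi\circ\text{poly})\le L\,\ell(\text{poly})\le L\,\ell(\Psi^{-1}\circ\gamma_0|_J)\le L^2\,\ell(\gamma_0|_J)$, i.e.\ a multiplicative factor $L^2$ rather than an additive $\varepsilon$. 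Uniform closeness of two paths does \emph{not} force closeness of their lengths, and because $\Psi$ is merely Lipschitz (not $C^1$) one cannot improve this by a local linearization argument. The phrase ``changes its length by at most a controlled amount'' is precisely the claim that needs an extra idea.

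The paper supplies that idea by inserting one more decomposition \emph{before} perturbing. Inside the relevant chart, write $[0,1]\setminus F=\bigcup_k(a_k,b_k)$; since $F$ is a closed nullset, $\sum_k(b_k-a_k)=1$, so for $K$ large the complement $\bigcup_{k=0}^K[c_k,d_k]$ of the first $K$ gap intervals still contains $F$ but has total parameter measure $<\varepsilon/\bigl(2L^2\|y-x\|\bigr)$. Only the (tiny) segments $\kappa_k$ over these bad intervals are transported into $V$ and replaced by an explicit ``up along $e_d$, across, back down along $e_d$'' detour $\eta_k$ with $\ell(\eta_k)\le 2\ell_k$; after pushing back through $\Psi$ the total added length is bounded by $2L^2\sum_k(d_k-c_k)\|y-x\|<\varepsilon$ even with the $L^2$ blow-up, and each $\Psi\circ\eta_k$ meets $\Theta$ only at its two endpoints. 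The remaining line segments over the gaps $(a_k,b_k)$ already avoid $\Theta$ and are left untouched, so contribute no excess length and no intersections. Without this ``shrink the modified set first'' step your chart-by-chart perturbation does not control the length, and I do not see a way to repair it short of rediscovering this decomposition.
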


\begin{proof} Let $x,y\in \R^d$ and $\varepsilon>0$.\\
\textit{Step 1:} Note that, since $\Theta$ is a Lipschitz topological submanifold and is
thus locally the Lipschitz image of a Lebesgue-nullset, it is itself a
Lebesgue-nullset (with respect to $\R^d$) as bi-Lipschitz homeomorphisms pertain measurability of sets and therefore
preserve Lebesgue-nullsets, see, e.g.,\cite[Lemma 7.25]{rudin86real}.
By virtue of Lemma \ref{lemma:polygonal-chain} we may thus restrict our
considerations to the case where 
$F:=\big\{t\in [0,1]\colon (1-t)x+ty\in \Theta\big\}$ has Lebesgue measure 0
(in $[0,1]$).

If $F$ is finite, then we are done. Otherwise assume first that $x\notin \Theta$. 
Since $\Theta$ is a closed set, we can find $z\in \Theta$
such that the line segment $\gamma_1$ connecting $x$ and $z$ intersects $\Theta$
precisely in $z$. If we can find a path $\gamma_2\colon[a,b]\to\R^d$ 
from $z$ to $y$ with 
$\ell(\gamma_2)<\|y-z\|+\varepsilon$ and such that 
$\gamma_2\big([a,b]\big)\cap\Theta$ is finite, then the concatenation 
$\gamma$ of the paths $\gamma_1$ and $\gamma_2$
is a path with the required properties and we are done. Thus we may assume that
$x\in \Theta$ and, by the same argument, that $y\in \Theta$.\smallskip

\noindent\textit{Step 2:}  
Write $g\colon [0,1]\to \R^d$, $g(t):=(1-t)x+ty$.
Since $\big\{ g(t) \colon t\in F\big\}\subseteq\Theta$ is compact, 
there exist finitely many $t_1,\ldots,t_n\in F$ and bounded open environments 
$U_j$ of $g(t_j)$, $V_j$ of $0$ and $\Psi_j\colon V_j\to U_j$
bi-Lipschitz such that for all
$z=(z_1,\dots,z_d)\in V_j$ it holds $\Psi_j(z)\in \Theta \Longleftrightarrow
z_{m+1}=\dots=z_d=0$. 

$F$ is a non-empty closed subset of $[0,1]$, so we can 
write
\[
[0,1]\setminus F=\bigcup_{k=1}^\infty (a_k,b_k)\,,
\]
where the right hand side is a disjoint union and, since $\lambda(F)=0$,
$\sum_{k=1}^\infty (b_k-a_k)=1$. 
Now for every $K\in \N$ there exist $c_0,\ldots,c_K,d_0,\ldots,d_K$ with 
\begin{equation}\label{eq:decomposition}
\Big(\bigcup_{k=1}^K (a_k,b_k)\Big)^c
=\bigcup_{k=0}^{K} [c_k,d_k] \supseteq F\,,
\end{equation}
where $[c_0,d_0],\ldots,[c_K,d_K]$ are again disjoint. 
We may assume that $K$ is large enough to guarantee that
for every interval $[c_k,d_k]$ there exists $j\in \{1,\ldots,n\}$ such that
$g( [c_k,d_k])\subseteq U_j$. In addition, we may assume that for every $j$ the functions 
$\Psi_j$ and $\Psi^{-1}_j$ are Lipschitz with common constant $L_j$. If we can find, for every $k=0,\ldots,K$,
a path $\gamma_k\colon[0,1]\to \R^d$ from $g(c_k)$ to $g(d_k)$ with 
$\ell(\gamma_k)<\|g(d_k)-g(c_k)\|+\frac{\varepsilon}{K+1}$ and such that 
$\gamma_k([c_k,d_k])\cap\Theta$ is finite, then we can construct a
path with the required properties. We may therefore concentrate 
on the case where the whole of $g([0,1])$ is contained in a single $U_j$, which we will do in Step 3.
\smallskip

\noindent\textit{Step 3:} Write $U:=U_j$, $V:=V_j$, $\Psi:=\Psi_j$, $L:=L_j$.  
Since $\Psi$ and $\Psi^{-1}$ are Lipschitz continuous with constant $L$, 
we have for every finite collection of paths
$\eta_1,\ldots,\eta_N$ in $U$
\[
 \sum_{n=1}^N \ell(\Psi^{-1}\circ\eta_n)\le L\sum_{n=1}^N \ell(\eta_n) \,.
\]

for any finite collection of paths
$\kappa_1,\ldots,\kappa_N$ in $V$
\[
\sum_{n=1}^N \ell(\Psi\circ\kappa_n)\le L\sum_{n=1}^N \ell(\kappa_n)\,.
\]

We repeat the earlier argument
to get, for every $K\in\N$, a disjoint union of the type \eqref{eq:decomposition}.
Now we choose $K$ big enough to ensure $\sum_{k=0}^K (d_k-c_k)<\frac{\varepsilon}{2 L^2 \|y-x\|}  $.
We write
$x_k:=(1-c_k)x+c_k y$ and  
$y_k:=(1-d_k)x+d_k y$.
For every $k\in\{0,\ldots,K\}$, let $\kappa_k\colon [c_k,d_k]\to U$, 
$\kappa_k(t):=(1-t)x+t y$ that is, $\kappa_k$ is a parametrization of the 
line-segment
from $x_k$ to $y_k$. Now $\kappa_0,\ldots,\kappa_K$ is a finite collection of 
 paths with 
\begin{align*}
\lefteqn{\sum_{k=0}^K \|\Psi^{-1}\circ\kappa_k(d_k)-\Psi^{-1}\circ\kappa_k(c_k)\|}\\
&\le \sum_{k=0}^K \ell(\Psi^{-1}\circ\kappa_k)\le L\sum_{k=0}^K \ell(\kappa_k)\\
&\le L\sum_{k=0}^K (d_k-c_k)\|y-x\|<\frac{\varepsilon}{2 L}\,.
\end{align*}
Set $\ell_k:=\|\Psi^{-1}\circ\kappa_k(d_k)-\Psi^{-1}\circ\kappa_k(c_k)\|$. 
For every $k$ with $\ell_k=0$ let $\eta_k$ be constant equal to 
$\Psi^{-1}\circ\kappa_k(c_k)$.

Denote by $e_d$ the vector $(0,\ldots,0,1)$.
For every $k$ with $\ell_k>0$ 
we construct a path $\eta_k\colon [0,2 \ell_k]\to V$ by
\[
\eta_k(t)=\begin{cases}
\Psi^{-1}\circ\kappa_k(c_k)+t a e_d& \text{if } 0\le t \le \frac{\ell_k}{2}\,,\\
\frac{3\ell_k-2t}{2\ell_k}\Psi^{-1}\circ\kappa_k(c_k)+\frac{2t- \ell_k}{2\ell_k}\Psi^{-1}\circ\kappa_k(d_k)+\frac{\ell_k}{2} a e_d&  \text{if } \frac{\ell_k}{2}\le t \le \frac{3\ell_k}{2}\,,\\
\Psi^{-1}\circ\kappa_k(d_k)+(2\ell_k-t)a  e_d& \text{if }  \frac{3\ell_k}{2}\le t \le 2 \ell_k\,,
\end{cases}
\]
where $a\in (0,1)$ is small enough so that $\eta_k(t)\in V$ for all $t\in [0,2 \ell_k]$.
By construction, $\ell(\eta_k)\le 2\ell_k$, such that 

\[
\sum_{k=0}^K \ell(\Psi\circ\eta_k)\le L\sum_{k=0}^K \ell(\eta_k)\le 2L\sum_{k=0}^K \ell_k< \varepsilon.
\]

Now define $\gamma$ as the concatenation of the following paths: 
\begin{itemize}
\item the paths $\Psi\circ\eta_k$ from $x_k$ to $y_k$ for $k=0,\ldots,K$;
 \item the line segments from $y_{k-1}$ to $x_k$ with 
lengths $\hat\ell_k:=\|x_k-y_{k-1}\|$ for $k=1,\ldots,K$.
\end{itemize}
Summing up, we get for the length of $\gamma$
\begin{align*}
\ell(\gamma)
&=\sum_{k=1}^K \hat \ell_k+\sum_{k=0}^K \ell(\Psi\circ\eta_k)\\
&=\sum_{k=1}^K \|x_k-y_{k-1}\|+\sum_{k=0}^K \ell(\Psi\circ\eta_k)\\
&<\|y-x\|+\varepsilon\,,
\end{align*}
and $\gamma$ has only finitely many intersections with $\Theta$. 
\end{proof}

\begin{corollary}
Let $\Theta$ be a Lipschitz submanifold of the $\R^d$ which is closed
as a subset. 

Then every continuous function $f\colon \R^d\to \R$ 
which is intrinsically Lipschitz continuous with exception set 
$\Theta$ is Lipschitz continuous.
\end{corollary}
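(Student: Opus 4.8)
The plan is to combine Theorem~\ref{th:lip} with Corollary~\ref{cor:main-th1-length}. First I would observe that $\R^d$ equipped with the Euclidean metric is a length space, so that the intrinsic metric $\rho_{\R^d}$ coincides with the Euclidean distance $d$. Next, by Theorem~\ref{th:lip}, a closed \textsc{lip}-submanifold $\Theta\subseteq\R^d$ of dimension $m<d$ is finitely permeable, and hence permeable in the sense of Definition~\ref{def:permeable}. These two facts put us exactly in the hypotheses of Corollary~\ref{cor:main-th1-length} with $M=\R^d$ and $\msr=\R$.

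The key steps, in order, are therefore: (i) verify that $(\R^d,\|\cdot\|)$ is a length space, which is immediate since any two points are joined by the straight line segment, whose length equals the Euclidean distance, so $\rho_{\R^d}=d$; (ii) invoke Theorem~\ref{th:lip} to conclude that $\Theta$ is permeable; (iii) apply Corollary~\ref{cor:main-th1-length} to the continuous function $f\colon\R^d\to\R$ that is intrinsically $L$-Lipschitz continuous on $E=\R^d\setminus\Theta$, obtaining that $f$ is $L$-Lipschitz continuous with respect to the Euclidean metric on all of $\R^d$.

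Since the statement only asserts ``Lipschitz continuous'' (not $L$-Lipschitz with a specified constant), a single application of Corollary~\ref{cor:main-th1-length} suffices: if $f$ is intrinsically Lipschitz continuous with exception set $\Theta$, then it is intrinsically $L$-Lipschitz for some $L$, and the corollary upgrades this to genuine $L$-Lipschitz continuity on $\R^d$. There is essentially no obstacle here: this corollary is a direct specialization, and the only thing to check carefully is that a closed \textsc{lip}-submanifold of dimension $m<d$ is indeed covered by Theorem~\ref{th:lip}, which it is by hypothesis. One could add a remark that the same argument works verbatim for target a general metric space $(\msr,d_\msr)$, but the stated corollary is phrased for $\msr=\R$.
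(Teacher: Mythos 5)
Your proof is correct and coincides with the paper's: the authors also simply combine Theorem~\ref{th:lip} (a closed \textsc{lip}-submanifold is finitely permeable, hence permeable) with Corollary~\ref{cor:main-th1-length} (applied to the length space $\R^d$). Your extra observation that $\rho_{\R^d}$ equals the Euclidean distance because straight segments realize it is exactly the point that makes Corollary~\ref{cor:main-th1-length} applicable.
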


\begin{proof}
This follows immediately from Corollary \ref{cor:main-th1-length} and Theorem \ref{th:lip}.
\end{proof}

\begin{example}
Consider the topologist's sine: 
\begin{align*}
\Theta&:=\left\{\left(t,\sin\left(\tfrac{1}{t}\right)\right)\colon t\in (0,\infty)\right\}\,,\\
\overline{\Theta}&\;=\left\{\left(t,\sin\left(\tfrac{1}{t}\right)\right)\colon t\in (0,\infty)\right\}\cup\big\{(0,s)\colon s\in [-1,1]\big\}
\,.
\end{align*}
It is readily checked that $\rho^{\overline{\Theta}}_{\R^2}=\rho^\Theta_{\R^2}=\rho_{\R^2}$, i.e., both $\Theta$ and
$\overline{\Theta}$ are permeable.  
$\Theta$ is a sub-manifold of $\R^2$, which is not a subset of a topologically closed submanifold of the $\R^d$,
while $\overline{\Theta}$ is closed, but not a submanifold.
Note also, that  for $x=(0,0)$, $y=(1,0)$ there is no path connecting $x$ and
$y$ of length smaller than 2 which has a finite intersection with
$\Theta$, in contrast to the case of closed Lipschitz submanifolds.
Therefore $\Theta$ is permeable but not finitely permeable.
\end{example}

The following example shows that one cannot simply dispense with the assumption that 
the exception set is topologically closed.

\begin{example}
Recall that the classical Cantor  set $C$ is the topological closure of the set
\[
C_0=\left\{\sum_{k=1}^n d_k3^{-k}: n\in \N,  d_k\in \{0,2\}\right\}\,.
\]
Every element from $C_0$ is the limit of an increasing sequence in $\R\setminus \overline{C_0}$:
\begin{align*}
0&=\lim_{j} -3^{-j} \\
\sum_{k=1}^n d_k3^{-k}&=\lim_{j\to\infty} \left(\sum_{k=1}^n d_k3^{-k}-3^{-(j+n)}\right)      &  (n\in \N, d_n=2)
\end{align*}
Denote by $D_0$ the union of all elements of all these sequences. Then $D_0$ consists only of 
isolated points and $\overline{D_0}\supseteq \overline {C_0}=C$. Therefore $D_0$ is a 0-dimensional submanifold of $\R$ which is not permeable. 

By extruding $D_0$ to $D_{d-1}:=D_0\times \R^{d-1}$ we get an example of a $(d-1)$-dimensional 
$C^\infty$-manifold which is not permeable (and not topologically closed).
\end{example}

We conclude this section with more examples of permeable sets.
First note that we can somewhat relax the requirement that a 
Lipschitz manifold be a closed subset of $\R^d$, since by 
Proposition \ref{prop:subset} the property $\rho^\Theta_{\R^d}=\rho_{\R^d}$
extends to subsets.

\smallskip

The conclusion of Theorem \ref{th:lip} also holds for unions of closed
Lipschitz submanifolds with transversal intersections, with the following
notion of {\em transversal intersection}:

\begin{definition}
Let $\Theta_1$ and $\Theta_2$ be two $C^k$-submanifolds of the $\R^d$,
and let $\xi\in
\Theta_1\cap\Theta_2$.
We say that $\Theta_1$ and $\Theta_2$ {\em intersect transversally} in $\xi$,
iff 
there exist 
\begin{itemize}
\item open sets $U,V\subseteq \R^d$ such that $\xi\in U$
\item  a $C^k$-diffeomorphism $\Psi:V\to U$  
\item linear subspaces $E_1$, $E_2$ with $\dim(E_j)=\dim(\Theta_j)$, $j=1,2$,
\end{itemize}
such that 
$\Psi(V\cap E_j)=U\cap \Theta_j$, $j=1,2$.

We say $\Theta_1$ and $\Theta_2$ {\em intersect transversally} iff they intersect
transversally in every $\xi\in  \Theta_1\cap\Theta_2$.

In the cases of a \textsc{top}- or \textsc{lip}-submanifold, one has to   
replace $C^k$-submanifolds and $C^k$-diffeomorphism above  by the 
notions for the respective classes.
\end{definition}

The proof of Theorem \ref{th:lip} for transversally intersecting unions
of closed \textsc{lip}-submanifolds differs only in the construction 
of the paths $\eta_k$. There one has to distinguish different cases,
whether both endpoints lie in different $E_1$, $E_2$ or in the same or even in
both.  The procedure can be extended to finitely many intersecting closed \textsc{lip}-submanifolds. Similar arguments yield that a topologically closed  \textsc{lip}-submanifold with boundary
is permeable. 

\begin{remark}
As unions of affine hyperplanes exhibit transversal intersections, they are permeable. Hence
piecewise Lipschitz functions in the sense of Definition \ref{def:pwlip-md} have permeable exception sets.
\end{remark}

\begin{remark}
Let $f\colon \R\to \R$ be Hölder continuous with Hölder exponent 
$\alpha\in (0,1)$ and let 
\[
\Theta:=\left\{(t,f(t))\colon t\in \R\right\}\,.
\]
An interesting question is: For which $\alpha$, if for any,
is $\Theta$ permeable?

The motivation for this example comes from the standard result from probability theory that almost every path 
of  a  Brownian motion $B$ on some probability space $(\Omega,\cF,(\cF_t)_{t\in [0,1]},P)$ is Hölder continuous with exponent $<\frac{1}{2}$. The graph of such a path constitutes a \textsc{top}-submanifold (with boundary), but not a \textsc{lip}-submanifold\footnote{To see this, assume that there is a bi-\textsc{lip} mapping which locally maps the path to the $x$-axis. Then the path from $x$ to $x+\delta$, $\delta>0$ sufficiently small, would be mapped back to a finite length arc in the Brownian path, which is impossible.}, so Theorem \ref{th:lip} does not apply.
A strong hint that  Brownian paths might not be permeable
is the following: Consider a bounded and progressively measurable process
$H\colon \Omega\times[0,1]\to\R$ 
and an equivalent change of measure from $P$ to $\hat P$ such that $\hat
B_t:=B_t-\int_0^t H_sds$ 
defines a Brownian motion under $\hat P$ (this change of measure exists by Girsanov's theorem). Now the set 
$\{t\in[0,1]\colon \hat B_t=0\}$ is uncountable with probability 
1 under $\hat P$. Therefore 
\[
P\left(\left\{t\in[0,1]\colon B_t=\int_0^t H_s ds\right\} \text{ is countable }
\right)=0\,.
\]
From that we conclude that, for a given $\omega\in \Omega$ the graph of the function $g\colon [0,1]\to \R$ with 
\(
g(t):=\int_0^t H_s(\omega)ds
\)
has uncountable intersection with the graph of $B(\omega)$ almost surely.
A further hint in this direction is Theorem 1.5 in \cite{ruscher}. It states that for every continuous function $g\colon [0,1]\to \R$ the zeros of $B-g$ have Hausdorff dimension at least 
$\frac{1}{2}$ with positive probability.
On the other hand, Theorem 1.2 in \cite{ruscher} states that there 
exists a function $g$, which is Hölder continuous with exponent smaller 
$\frac{1}{2}$ such that $B-g$ has isolated zeros with positive probability
(nothing is said there about the length of the graph of $g$).
See also the related questions in \cite[Open Problem (1)]{peres2010}.
\end{remark}

\section*{Acknowledgements}

Gunther Leobacher is supported by the Austrian
Science Fund (FWF): Project F5508-N26, which is part of the Special Research
Program `Quasi-Monte Carlo Methods: Theory and Applications'.

We are grateful to Tapio Rajala for fruitful discussions and valuable suggestions, and for pointing out Proposition \ref{prop:subset-cont} as well as proposing the example in Remark \ref{rem:question-Rd}.
The authors would like to thank the anonymous referee who provided useful and detailed comments on an 
earlier version of the manuscript.


\subsection*{Author's addresses}

\noindent Gunther Leobacher, Institute of Mathematics and Scientific Computing, University of Graz. 
Heinrichstraße 36, 8010 Graz, Austria. \\{\tt gunther.leobacher@uni-graz.at}\\

\noindent Alexander Steinicke, Institute of Mathematics, 
University of Rostock,
Ulmenstra\ss e 69, Haus 3, 18051 Rostock, Germany. \\{\tt alexander.steinicke@uni-rostock.de}\smallskip

\noindent Alexander Steinicke, Institute of Applied Mathematics, 
Montanuniversitaet Le\-o\-ben.
Peter-Tunner-Straße 25/I, 8700 Leoben, Austria. \\{\tt alexander.steinicke@unileoben.ac.at}
\end{document}